\theoremstyle{definition}
\newtheorem{exmp}{Example}
\journal{}
\newtheorem{theorem}{Theorem}
\newtheorem{lemma}{Lemma}
\newtheorem{corollary}{Corollary}
\newtheorem{proposition}{Proposition}
\newtheorem{definition}{Definition}
\newtheorem{remark}{Remark}
\newtheorem{observation}{Observation}
\let\today\relax
\begin{document}

\doublespacing
\begin{frontmatter}

\title{The Outcome Range Problem in Interval Linear Programming}

\author[1]{Mohsen Mohammadi\corref{cor1}}
\ead{m0moha15@louisville.edu}
\cortext[cor1]{Corresponding author}
\author[1,2]{Monica Gentili}
\ead{monica.gentili@louisville.edu}
\address[1]{Department of Industrial Engineering, University of Louisville, J. B. Speed Building, 2301 S 3rd St, Louisville, KY 40292, United States}
\address[2]{Department of Mathematics, University of Salerno, Via Giovanni Paolo II, 132, Fisciano 84084, SA, Italy}

\begin{abstract}

Quantifying extra functions, herein referred  to as {\it outcome functions}, over optimal solutions of an optimization problem can provide decision makers with additional information on a system. This bears more importance when the optimization problem is subject to uncertainty in input parameters. In this paper, we consider linear programming problems in which input parameters are described by real-valued intervals, and we  address the {\it outcome range problem} which is the problem of finding the range of an outcome function over all possible optimal solutions of a linear program with interval data. We give a  general definition of the  problem and then focus on a special class of it where uncertainty occurs only in the right-hand side of the underlying linear program.  We show that our problem is computationally hard to solve and also study some of its theoretical properties. We then develop two approximation methods to solve it: a local search algorithm and a super-set based method. We test the methods on a set of randomly generated instances. We also provide a real case study on healthcare access measurement to show the relevance of our problem for reliable decision making.
\end{abstract}

\begin{keyword}
interval linear programming, interval analysis, linear programming, heuristics, healthcare, inexact data.
\end{keyword}
 \date{\today}

\end{frontmatter}

\doublespacing
\setlength{\abovedisplayskip}{0.1 pt}
\setlength{\belowdisplayskip}{0.1 pt}

\section{Introduction}

In real life problems, we are sometimes interested in evaluating additional functions of interest over the results of an optimization model, that is, we are interested in evaluating functions of optimal decisions.  Let us consider, for instance, an optimization model developed to design a new transportation network. A possible function of interest, in addition to a cost function which would be optimized, could  be an environmental cost  function, useful to evaluate how the optimal transportation network impacts surrounding areas. As another example, decisions regarding the optimal location of clinics in a given region,  while  they can improve public health in a community, might in turn lead to undesirable consequences on a larger scale, such as disparities in access to healthcare among different communities.
We refer to the additional functions of interest as {\it outcome functions}, which are used to evaluate unintended consequences of optimal decision making.

Outcome functions do not have a direct role in the decision process. They are not, in other words, the main objective function of the optimization model, whereas  they  might have a significant  role in providing important information for future decisions or actions. 
This is particularly relevant for government agencies, public health decision makers, policy makers, city
managers and other stakeholders who make decisions that have differential impacts on different
communities and sub-populations. For example, Nobles et al. \cite{nobles2014spatial} and Gentili et al. \cite{gentili2016projecting,gentili2018quantifying,gentili2015small}  used outcome functions to evaluate  spatial access to  pediatric and adult primary care. They  developed an optimization model for matching patients and providers, and defined two linear outcome functions to quantify spatial access to healthcare services. In another study, Zheng et al. \cite{zheng2017regularized} presented an application  in telecommunication networks, where one is interested in designing the network such that enough band-width is allocated between two nodes in order to minimize the total demand lost. An outcome function of interest, in this context, is the local performance of each node defined as the volume of unmet requests from the node.

Quantifying the impact of decisions using outcome functions becomes even more relevant when decisions
are made in an uncertain environment, which is the focus of this paper. 
Uncertainty in  optimization problems usually derives from
uncertainty in input parameters, occurring  due to measurement
errors, missing data, rounding errors, statistical estimations, etc. Solutions to optimization problems can exhibit considerable sensitivity to perturbations in the input parameters, thus often returning a solution which is highly infeasible and/or suboptimal \cite{bertsimas2004price}. 

Throughout the years, several approaches to treat uncertainty in input data have emerged such as robust optimization, stochastic optimization, parametric programming, fuzzy programming, and interval optimization, depending on the source of uncertainty and the requirements on the returned solution. In this paper, we adopt the approach of interval linear programming (ILP) where we assume that input parameters can vary within a-priori known  intervals. Several topics have been subject of research in this area (see \cite{survey} for a comprehensive survey on the topics): (i) Oettli and Prager \cite{oettli1964compatibility} and Rohn \cite{rohn2} addressed the problem of characterizing the set of all  possible feasible solutions; (ii) Novotn{\'a} et al. \cite{novotna2020duality} studied the duality gap problem in interval linear programming; (iii) the  problem of describing the set of all possible optimal solutions was  studied by Allahdadi and Nehi \cite{allahdadi2013optimal} and later by Garajov{\'a} and Hlad{\'\i}k \cite{garajova2019optimal} and  also its approximation was discussed by \cite{contractor, 10.1145/3396474.3396479,jansson1991rigorous}; (iv)  the problem of determining a satisficing solution space was subject of study in \cite{wang2014violation,zhou2009enhanced}. A problem of particular interest, because of its relevance from an application perspective, is that of  finding the range of  optimal values  of an interval linear program, known in the  literature as the \textit{optimal value range problem}.  The exact formulation and characterization of the problem was discussed in \cite{chinneck2000linear,hladik2009optimal,mraz1998calculating,rohn1}, while \cite{hladik2014approximation,soco} developed some approximation algorithms for the intractable cases. A different approach to  get a satisficing optimal value range was  investigated by \cite{huang1995grey}. The optimal value range problem has been applied in several application problems, such as transportation problems with interval supply and demand \cite{itp,d2020optimal,juman2014heuristic}, matrix games with interval-valued payoffs  \cite{li2016interval,liu2009matrix}, and portfolio selection problems with interval approximations  of expected returns \cite{kumar2016interval,lai2002class}.

In this context, our focus is on studying a problem close to the optimal value range problem where we are interested in determining the range of an outcome function (other than the objective function) associated with an interval linear program. To this aim, we introduce the {\it Outcome Range Problem} which consists of determining the minimum and
the maximum values of a given (additional) linear function over the set of all possible optimal solutions of an interval-valued linear program. We formally define our problem, analyze its relation to and differences with the optimal value range problem, and  study a specific case  where uncertainty occurs only in the right-hand side of the underlying linear program. We show that solving the outcome range problem  to optimality is not an easy task; we then study some theoretical properties of the problem and develop two solution approaches to approximate the optimal values. We evaluate our solution techniques on a set of randomly generated instances, and finally, to outline the relevance of our problem for reliable decision making, we present a case study where we apply our approach to quantify spatial access to healthcare services.

The remainder of the paper is structured as follows. We first present an introductory example to motivate our problem. We then introduce some basic notations, and formally define the outcome range problem in Section \ref{oip}. We assess the  computational complexity of the problem in Section \ref{properties}. In Section \ref{sec:prop}, we explore theoretical properties of our problem. We describe our  solution techniques in Section \ref{techniques}. In Section \ref{results}, we discuss results of our experimental study.  Section \ref{case} presents an healthcare application of our problem. Finally, we summarize our findings in Section \ref{conclusion}.

\section{Quantifying an Outcome Function  Under Uncertainty: An Introductory Example} \label{out}
Let us consider the classical transportation problem \cite{winston2003introduction} where the main goal is to decide how to transfer goods from a set of $m$ origins to a set of $n$ destinations with minimal cost such that the capacity at each origin is not exceeded, and the demand at each destination is satisfied.
The transportation problem can be formulated as follows

\begin{eqnarray}
 \min \; \sum_{i=1}^{n}\sum_{j=1}^{m} c_{ij}x_{ij} \label{objTransportation}\\
\text{subject to} \nonumber\\
 \sum_{j=1}^{m} x_{ij} &\leq s_i, & \forall i=1,...,n, \label{con:capacity}\\
 \sum_{i=1}^{n} x_{ij} &\geq d_j, & \forall j=1,...,m, \label{con:demand}\\
 x_{ij} &\geq 0, &  \forall i=1,...,n,\;j=1,...,m, \label{con:nonneg}
\end{eqnarray}
where $x_{ij}$ is a decision variable which determines the size of the shipment from origin $i$ to destination $j$, $c_{ij}$ is the unit shipping cost from origin  $i$ to destination $j$, $s_i$ is the total supply of origin $i$, and $d_j$ is the total demand of destination $j$. The objective function of the model minimizes the total transportation cost. The two sets of constraints ensure that the resulting transportation plan respects the capacity at each origin (Eq. (\ref{con:capacity})), and meets the demand of each destination (Eq. (\ref{con:demand})). Let us consider a specific instance of the problem where there are three origins and three destinations (see Table \ref{parameters} for shipping costs,  supply and  demand levels).  
\begin{table}[H] 
\caption{Shipping costs, supply and demand levels.}
\centering
\begin{tabular}{@{}ccccc@{}}
\toprule
            & \multicolumn{3}{c}{to}         & \multirow{2}{*}{supply (ton)} \\ \cmidrule(lr){2-4}
from        & destination 1 & destination 2 & destination 3 &                         \\ \midrule
origin 1 & \$40       & \$21       & \$23       & 70                     \\
origin 2 & \$24       & \$43       & \$19       & 75                     \\
origin 3 & \$31       & \$35      & \$21       & 81                     \\ \midrule
demand (ton)      & 85      & 64      & 71       &                         \\ \bottomrule
\end{tabular}
\label{parameters}
\end{table}
The optimal shipping cost, considering the input data in Table \ref{parameters}, is \$4,945 and the optimal solution to the problem is shown in Figure \ref{fig:transportation}, where labels on each arc denote the total quantity shipped on the arc. We can associate with the transportation problem an outcome function to evaluate, for example, the environmental  impact \cite{park2016environmental,zhou2014measuring} of the  optimal transportation plan, as total pounds of CO$_2$ emissions. 
\begin{figure} [h] 
\centering
    \includegraphics[scale=0.65]{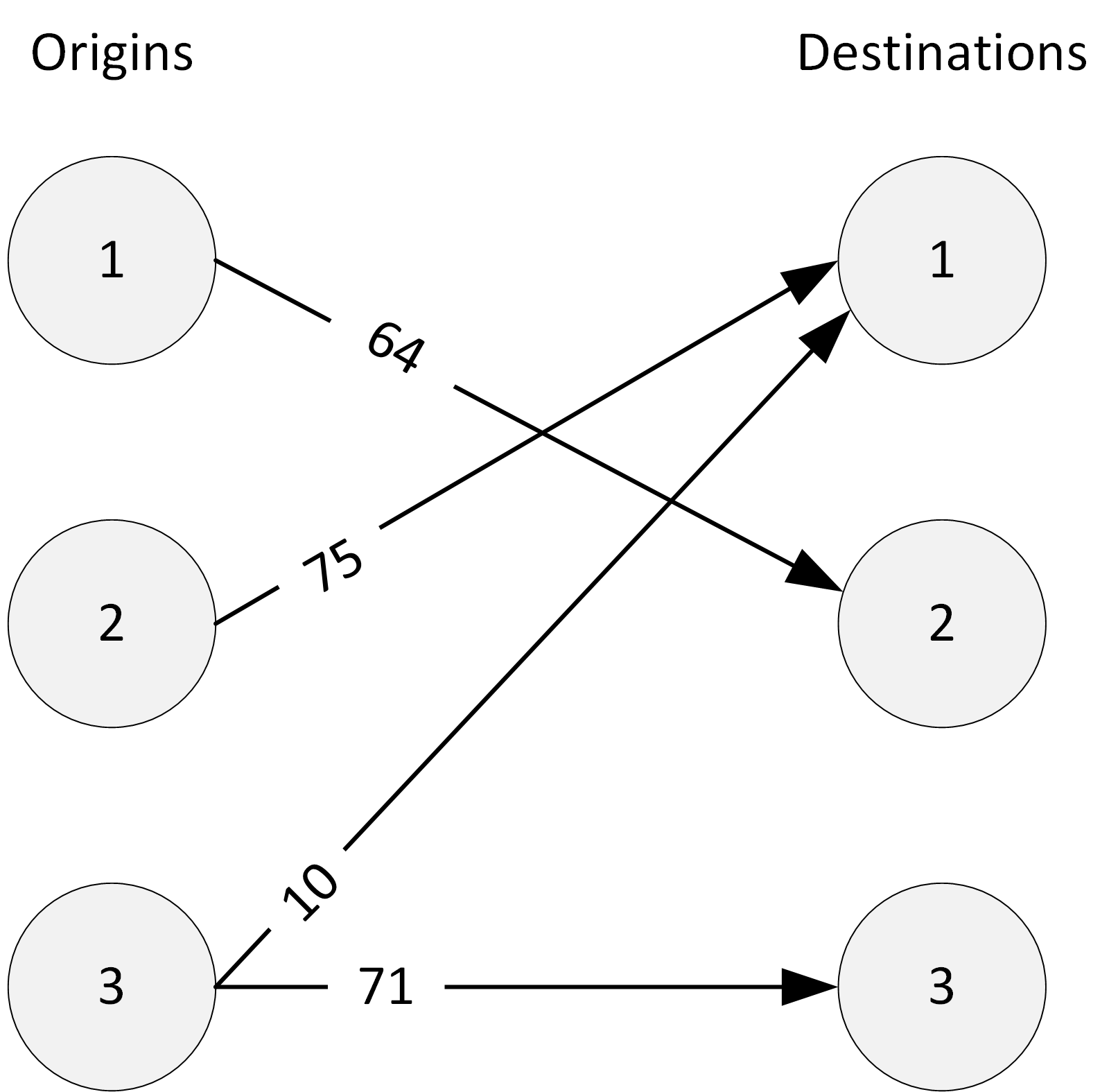}
\caption{The optimal transportation plan for model (\ref{objTransportation}-\ref{con:nonneg}) with input data as described in Table \ref{parameters}.}
\label{fig:transportation}
\end{figure}
The CO$_2$ emissions depend on the amount of fuel consumed to transport the products to destinations, and consequently varies with the travel distance and with the amount of products. Let $r_{ij}$ denote the total pounds of CO$_2$ emitted in the atmosphere per unit of the product shipped from origin $i$ to destination $j$ (the specific values of these parameters for our example are reported in Table \ref{table:co2}), and let $f(x) = \sum_{i,j}r_{ij}x_{ij}$ be an outcome function associated with a given transportation problem.
The value of this outcome function on the optimal transportation plan for our example is equal to 3,940 $lb$.

\begin{table}[h]
\centering
\caption{The CO$_2$ emission associated with the arcs of the transportation network.}

\begin{tabular}{@{}cccc@{}}
\toprule
            & \multicolumn{3}{c}{to}         \\ \cmidrule(l){2-4} 
from        & destination 1 & destination 2 & destination 3 \\ \midrule
origin 1 & 30 $lb$      & 17 $lb$      & 18 $lb$      \\
origin 2 & 19 $lb$     & 32 $lb$      & 14 $lb$     \\
origin 3 & 22 $lb$      &25 $lb$      & 17 $lb$      \\ \bottomrule
\end{tabular}
\label{table:co2}
\end{table}
Now let us assume that the demands are not known with certainty, but rather they vary in given intervals. Then the mathematical formulation reads 
\begin{eqnarray}
\min \; \sum_{i=1}^{n}\sum_{j=1}^{m} c_{ij}x_{ij} \nonumber\\
\text{subject to} \nonumber\\
\sum_{j=1}^{m} x_{ij} &\leq s_i , & \forall i=1,...,n, \nonumber\\
\sum_{i=1}^{n} x_{ij} &\geq  [\underline{d}_j, \overline{d}_j] , & \forall j=1,...,m ,\nonumber\\
x_{ij} &\geq 0, & \forall i=1,...,n,\;j=1,...,m, \nonumber
\end{eqnarray}

\noindent where $[\underline{d}_j, \overline{d}_j]$ is the range of values which can be assumed by the demand at destination $j$, for all $j$. The  question we would like to address is:  \textit{how does  uncertainty in the parameters affect the environmental cost?} That is, \textit{how does the environmental cost (the total CO$_2$ emission) change when the parameters change?} If we apply one of the most commonly used approaches to address uncertainty in optimization models such as, for example, robust optimization, we would only be able to evaluate the outcome function on a single robust solution \cite{soyster1973convex} or a number of solutions with some level of protection against uncertainty in the data \cite{bertsimas2004price}. However, such an evaluation would not answer our question of quantifying the variation of the  outcome function in response to uncertainty in the parameters. A much more useful information would be, for example, the range of variation of the  outcome function, that is, the {\it best} and \textit{worst} values of the outcome function over the set of {\it all} the optimal solutions corresponding to all realizations of the uncertain data. 
\begin{table}[H]
\caption{All the possible realizations of the uncertain demand with the corresponding optimal solutions and values of $f(x)$ for our  transportation problem example.}
\tabcolsep=0.2cm
\renewcommand\arraystretch{0.72}
\small
\centering
\begin{tabular}{@{}cccccccccccc@{}}
\toprule
 &  & \multicolumn{9}{c}{optimal solutions} &  \\ \cmidrule(lr){3-11}
\makecell{data \\realization }& demand values & $x_{11}$ & $x_{12}$ & $x_{13}$ & $x_{21}$ & $x_{22}$ & $x_{23}$ & $x_{31}$ & $x_{32}$ & $x_{33}$ & $f(x)$ \\ \midrule
1 & $\{d_1=85,d_2=64,d_3=71\}$ & 0 & 64 & 0 & 75 & 0 & 0 & 10 & 0 & 71 &3,940 $lb$\\
2 & $\{d_1=85,d_2=64,d_3=72\}$ & 0 & 64 & 1 & 75 & 0 & 0 & 10 & 0 & 71 & 3,958 $lb$\\
3 & $\{d_1=85,d_2=64,d_3=73\}$ & 0 & 64& 2 & 75 & 0 & 0 & 10 & 0 & 71& 3,976 $lb$\\
4 & $\{d_1=85,d_2=65,d_3=71\}$ & 0 & 65 & 0 & 75 & 0 & 0 & 10 & 0 & 71&3,957 $lb$\\
5 & $\{d_1=85,d_2=65,d_3=72\}$ & 0 & 65 & 1 & 75 & 0 & 0 & 10 & 0 &71 &3,975$lb$\\
6 & $\{d_1=85,d_2=65,d_3=73\}$ & 0 & 65 & 2 & 75 & 0 & 0 & 10 & 0 &71 &3,993 $lb$\\
7 & $\{d_1=85,d_2=66,d_3=71\}$ & 0 & 66 & 0 & 75 & 0 & 0 & 10 & 0 & 71&3,974 $lb$\\
8 & $\{d_1=85,d_2=66,d_3=72\}$ & 0 & 66 & 1 & 75 & 0 & 0 & 10 & 0 & 71&3,992 $lb$\\
9 & $\{d_1=85,d_2=66,d_3=73\}$ & 0 & 66 & 2 & 75 & 0 & 0 & 10 & 0 & 71&4,010 $lb$\\
10 & $\{d_1=86,d_2=64,d_3=71\}$ & 0 & 64 & 1 & 75 & 0 & 0 & 11 & 0 & 70 &3,963 $lb$\\
11 & $\{d_1=86,d_2=64,d_3=72\}$ & 0 & 64 & 2 & 75 & 0 & 0 & 11 & 0 & 70 &3,981 $lb$\\
12 & $\{d_1=86,d_2=64,d_3=73\}$ & 0 & 64 & 3 & 75 & 0 & 0 & 11 & 0 & 70 &3,999 $lb$\\
13 & $\{d_1=86,d_2=65,d_3=71\}$ & 0 & 65 & 1 & 75 & 0 & 0 & 11 & 0 & 70 &3,980 $lb$\\
14 & $\{d_1=86,d_2=65,d_3=72\}$ & 0 & 65 & 2 & 75 & 0 & 0 & 11 & 0 & 70 &3,998 $lb$\\
15 & $\{d_1=86,d_2=65,d_3=73\}$ & 0 & 65 & 3 & 75 & 0 & 0 & 11 & 0 & 70 &4,016 $lb$\\
16 & $\{d_1=86,d_2=66,d_3=71\}$ & 0 & 66 & 1 & 75 & 0 & 0 & 11 & 0 & 70 &3,997 $lb$\\
17 & $\{d_1=86,d_2=66,d_3=72\}$ & 0 & 66 & 2 & 75 & 0 & 0 & 11 & 0 & 70 &4,015 $lb$\\
18 & $\{d_1=86,d_2=66,d_3=73\}$ & 0 & 66 & 3 & 75 & 0 & 0 & 11 & 0 & 70 &4,033 $lb$\\
19 & $\{d_1=87,d_2=64,d_3=71\}$ & 0 & 64 & 2 & 75 & 0 & 0 & 12 & 0 & 69 & 3,986 $lb$\\
20 & $\{d_1=87,d_2=64,d_3=72\}$ & 0 & 64 & 3 & 75 & 0 & 0 & 12 & 0 & 69 &4,004 $lb$\\
21 & $\{d_1=87,d_2=64,d_3=73\}$ & 0 & 64 & 4 & 75 & 0 & 0 & 12 & 0 & 69 &4,022 $lb$\\
22 & $\{d_1=87,d_2=65,d_3=71\}$ & 0 & 65 & 2 & 75 & 0 & 0 & 12 & 0 & 69 &4,003 $lb$\\
23 & $\{d_1=87,d_2=65,d_3=72\}$ & 0 & 65 & 3 & 75 & 0 & 0 & 12 & 0 & 69 &4,021 $lb$\\
24 & $\{d_1=87,d_2=65,d_3=73\}$ & 0 & 65 & 4 & 75 & 0 & 0 & 12 & 0 & 69 &4,039 $lb$\\
25 & $\{d_1=87,d_2=66,d_3=71\}$ & 0 & 66 & 2 & 75 & 0 & 0 & 12 & 0 & 69 &4,020 $lb$\\
26 & $\{d_1=87,d_2=66,d_3=72\}$ & 0 & 66 & 3 & 75 & 0 & 0 & 12 & 0 & 69 &4,038 $lb$\\
27 & $\{d_1=87,d_2=66,d_3=73\}$ & 0 & 66 & 4 & 75 & 0 & 0 & 12 & 0 & 69 &4,056 $lb$\\ \bottomrule
\end{tabular}
\label{table:uncertainty}
\end{table}

Going back to our example, let us assume the demand level intervals are $d_1 \in [85,87]$, $d_2 \in [64,66]$, and $d_3 \in [71,73]$. For the sake of clarity in the exposition, let us also assume that the demand at the destinations can only take integer values in the given intervals. By applying a conservative robust approach, we would look for a shipment plan which is feasible under all the possible data perturbations, and would then evaluate the outcome function on the returned robust solution. In this case for example, by applying the worst case robust approach \cite{gabrel2010linear}, we would choose to ship 87 units to destination 1, 66 units to destination 2, and 73 units to destination 3 for a total cost of \$ 5,099, and with an environmental impact equal to  4,056 $lb$.

Let us list, for this simple example, all the realizations of the uncertain data. They are shown in the first two columns of Table \ref{table:uncertainty}.  For each realization of the data, we solved the corresponding transportation problem, and evaluated the outcome function on the corresponding optimal solution. Columns 3-11 in the table report the optimal solutions, and the last column in the table reports the corresponding value of the outcome function. In this simple example, the best value of the outcome function is equal to 3,940 $lb$ (corresponding to scenario 1)  and the worst value is equal to 4,056 $lb$ (corresponding to scenario 27). Hence, in this case, we can say that given all the possible realizations of the interval data, the total CO$_2$ emission of the transportation plan would range between 3,940 $lb$ and 4,056 $lb$. As can be seen from the results, the optimal solutions are very sensitive to the demand perturbations. This makes the problem of finding the best and the worst values of $f(x)$ a nontrivial one. 

In this simple example, given a linear program with interval parameters and an associated linear outcome function, we determined the best and the worst values of the latter among all the possible optimal solutions obtained from all the realizations of the interval data. We refer to this problem as the outcome range problem. Its formal definition is given in the next section.
\section{The Outcome Range Problem} \label{oip}
Let us introduce some needed notation which is commonly used in the interval linear programming literature \cite{survey,rohn2005handbook}.
 Given two matrices $\underline{A},\overline{A} \in \mathbb{R}^{m \times n}$, we define an interval matrix as the set
$$\mathbf{A}=[\underline{A}, \overline{A}]:=\{A \in \mathbb{R}^{m \times n}:\; \underline{A} \leq A \leq \overline{A}\},$$
where matrices $\underline{A},\overline{A}$ are called the lower and the upper bounds  of $\mathbf{A}$, respectively, and comparing matrices is understood componentwise. The set of all $m$-by-$n$ real interval matrices is denoted by $\mathbb{IR}^{m\times n}$. We define an interval vector analogously. For the sake of simplicity, we write $\mathbb{IR}^{m}$ instead of $\mathbb{IR}^{m\times 1}$ to denote the set of all real interval vectors of order $m$. Throughout this paper, we use bold symbols for interval vectors and matrices.
Let us consider the following interval linear programming (ILP) problem in the form of
\begin{equation}
\label{ILP}
 \min \; \mathbf{c}^Tx \;\;\text{subject to}\;\;x \in \mathcal{M}(\mathbf{A},\mathbf{b}),
\end{equation}
where we are given $\mathbf{c} \in \mathbb{IR}^n$, $\mathbf{b} \in \mathbb{IR}^m$, and $\mathbf{A} \in \mathbb{IR}^{m \times n}$.  $\mathcal{M}(\mathbf{A},\mathbf{b})$ denotes the feasible set described by  linear constraints with the interval coefficient  matrix $\mathbf{A}$ and the interval right-hand side vector $\mathbf{b}$. 
Interval linear programming has been extensively studied with three main types of $\mathcal{M}(\mathbf{A},\mathbf{b})$, which are shown in Table \ref{types}. The type of constraints and restriction on variables in an interval linear program can considerably impact its properties. Thus, each type of interval linear programs is usually treated separately in the literature.\footnote{ References \cite{chinneck2000linear,hladi2013weak} address the general form.}
\begin{table}[H]
\centering
\caption{Different types of interval linear constraints \cite{survey}}
\label{types}
\begin{tabular}{@{}cl@{}}
\toprule
type & \multicolumn{1}{c}{interval linear system} \\ \midrule
(I) & $\mathcal{M}(\mathbf{A},\mathbf{b})= \{x\in \mathbb{R}^{n};\;\;\mathbf{A}x=\mathbf{b},\;\;x\ge 0\}$ \\
(II) & $\mathcal{M}(\mathbf{A},\mathbf{b})= \{x\in \mathbb{R}^{n};\;\;\mathbf{A}x \le \mathbf{b}\}$ \\
(III) & $\mathcal{M}(\mathbf{A},\mathbf{b})= \{x\in \mathbb{R}^{n};\;\;\mathbf{A}x\le \mathbf{b},\;\;x\ge 0\}$ \\ \bottomrule
\end{tabular}
\end{table}

We refer to any triple $(A, b, c)$,  where $A\in\mathbf{A}$,  $b\in\mathbf{b}$, and $c\in\mathbf{c}$, as a  {\it scenario}. With each scenario  $(A,b,c)$, we can associate a linear program, namely LP$(A,b,c)$, whose feasible set and optimal value are denoted by $\mathcal{M}(A,b)$ and $z(A,b,c)$, respectively, i.e.,
\begin{equation*}
 z(A,b,c) := \{\min\; c^Tx\;\;\text{subject to}\;\;x \in \mathcal{M}(A,b)\}.
\end{equation*}
Hence, an interval linear program is a family of linear programs associated with all $A \in \mathbf{A}$, $b\in \mathbf{b}$ and $c\in \mathbf{c}$. 
For a particular scenario $(A,b,c)$, the corresponding LP$(A,b,c)$ can be infeasible, unbounded or admit a finite optimal value. We denote by $s(A,b,c)$ an optimal solution (or the set of all optimal solutions) of a linear program LP$(A,b,c)$, if any, admitting a finite optimal value. We denote by $\Omega$ the set of all the optimal solutions of an interval linear program, referred to as the {\it  optimal set}, that is,
$$\Omega := \bigcup_{A \in \mathbf{A}, b\in \mathbf{b}, c\in \mathbf{c}}s(A,b,c).$$

We are now ready to formally define our problem. Given the ILP (\ref{ILP}) and an additional linear function $f: \mathbb{R} ^n \rightarrow \mathbb{R}$, where $f(x)=r^Tx$ with $r \in \mathbb{R}^{n}$, the {\it outcome range problem} consists in solving the two following optimization problems
	\begin{align}
	\underline{f}& := \{ \min\; f(x)\;\;\text{subject to}\;\; x \in \Omega \}, \nonumber \\
	\overline{f} &:= \{ \max\; f(x)\;\;\text {subject to} \;\; x \in \Omega \}. \nonumber
	\end{align}
We define the pair of optimal values $ \{ \underline{f},\overline{f}\}$ to be the optimal solution of the outcome range problem.
\begin {exmp}\label{exmp}
Consider the following two-dimensional  linear program with interval right-hand sides

$$\min \;\;(2,-5) ^{T}x\;\; \text{subject to}\; \left(\begin{array}{cc} 1& -1\\ -1 & -1\\0&1 \end{array}\right)x\le \left(\begin{array}{c} \left[4, 7\right]\\ \left[-6, 8\right]\\ \left[4,9\right] \end{array}\right),\;\; x\ge0, $$
and consider the following outcome function
$$
f(x) = 8x_1 + 9x_2.
$$
Let us consider Figure \ref{fig} where the optimal solution $\{\underline{f}, \overline{f}\}$ of the problem is shown. In the figure, the intersection and the union of all the feasible sets of  the linear programs associated with all the scenarios are shown in dark and light gray, respectively. Specifically, the intersection of all the feasible sets is obtained by setting the right-hand sides at their lower bound, while the union of all the feasible sets is obtained by setting all the right-hand sides at their upper bound. The black area represents the set $\Omega$, that is, the set of all optimal solutions obtained from all the realizations of the interval data. Both the minimum and the maximum values of $f(x)$ occur at the endpoints of the bold line and are shown in the figure. Their values are $\underline{f} = 36$ and $\overline{f} = 81$, respectively. In particular, $\overline{f}$ is obtained on the point $x^{1*} =(0,9)$ which is the optimal point of several linear programs one of which is associated, for example, with scenario $b^T =(4,-6,9) $, while $\underline{f}$ is obtained on the point $x^{2*} =(0,4)$ which is the optimal point of a linear program associated, for example, with scenario $b^T =(7,8,4)$. 
\end{exmp}

\begin{figure} [h!] 
\centering
    \includegraphics[scale=0.80]{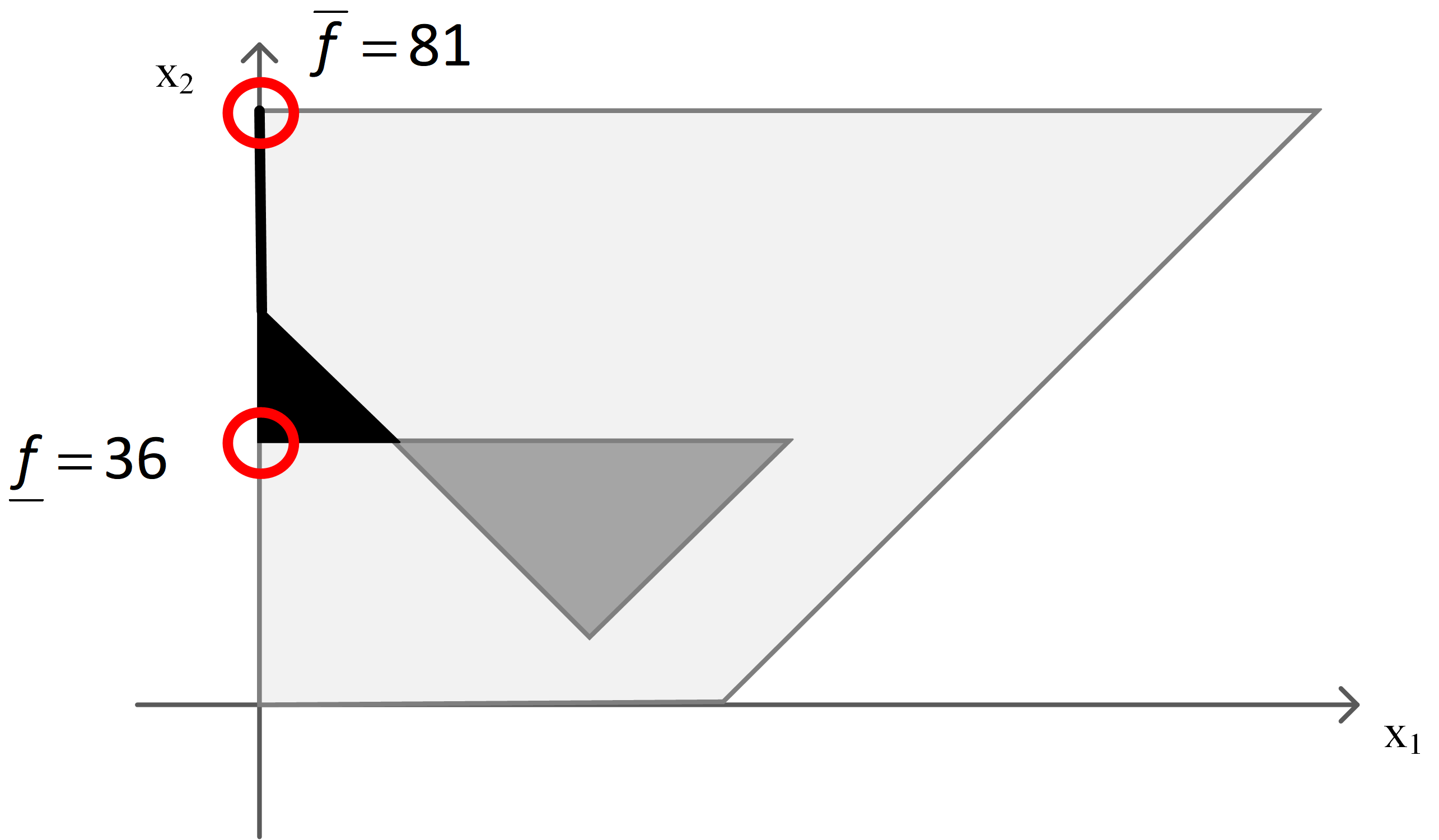}
\caption{(Example \ref{exmp}) Intersection of all feasible sets in dark gray; union in light gray; set of all optimal solutions in black.}
\label{fig}
\end{figure}

\subsection{Our Focus}
\label{sec:Focus}
As can be observed from Example \ref{exmp}, the difficulty in solving the outcome range problem relies on the fact that its feasible set, that is, the set $\Omega$, is not explicitly known; nor a convenient implicit description of it (e.g., polyhedral description)
is available in general \cite{garajova2019optimal,contractor} (with some exceptions as outlined in  Section \ref{properties}). This is true even if we consider a simplified
version of the underlying ILP where we only deal with interval right-hand sides. As we mentioned earlier, the three types of interval linear programs are analyzed separately in the literature because the feasible region and the optimal set might change when applying standard linear transformations. In the discussion to follow, we will focus on solving the outcome range problem
when the underlying interval linear program is of Type III, that is, it contains inequality and non-negativity constraints, and uncertainty occurs only in the
right-hand side  of the program. Formally, the interval linear program we will be considering is the following (a special case of Type III)
\begin{equation}
\label{ILPb}
[\text {ILP}_b]\;\;\;\;\;\;  \min\; c^Tx \;\; \text{subject to} \;\;Ax \le \mathbf{b},\;\; x\geq 0,
\end{equation}
where $c\in \mathbb{R}^n$, $\mathbf{b}\in \mathbb{IR}^{m}$, and $A\in \mathbb{R}^{m\times n}$ are given. The linear program and an optimal solution (or the set of all optimal solutions), if one exists, associated with a given scenario $b\in\mathbf{b}$ are denoted by LP$(b)$ and $s(b)$, respectively. We also denote by $z(b)$ the optimal value corresponding to LP$(b)$ (infinity and infeasiblity are also allowed). We  focus on solving the two following optimization problems
\begin{align}
\underline{f}&= \{ \min\; f(x)\;\; \text{subject to} \;\; x \in \Omega_b \label{best-b}\},\\
\overline{f}&=\{ \max \;f(x)\;\; \text{subject to} \;\; x \in \Omega_b \label{worst-b}\},
\end{align}
where $\Omega_b$ is the optimal set of ILP$_b$. In the rest of the paper, we will refer to this special case of the outcome range problem as ORP$_b$. 
\begin{remark}
From an application perspective, solving ORP$_b$ is meaningful when the set $\Omega_b$ is not empty and the two values $\underline{f}$ and $\overline{f}$ are finite, i.e., the set  $\Omega_b$ is bounded (see \cite{thesis,garajova2019optimal} for conditions for emptiness and boundedness of $\Omega_b$ ). In what follows, we will assume this is the case. 
\end{remark}
\section {Computational Complexity of the Outcome Range Problem (ORP$_b$)} \label{properties}
We here address the computational complexity of ORP$_b$. Some additional notation is needed at this point. Let us recall the linear program associated with a given $b \in \mathbf{b}$ (i.e., LP$(b)$)
 \begin {equation*}\label{eq:basis}
 \min\; c^T x\;\;\textrm{subject to}\;\; Ax\le b,\; x\ge0.
 \end{equation*}
 The standard form reads 
  \begin{equation}\label{eq:refor}
  \min\; c^T x+0^Td\;\;\textrm{subject to}\;\; Ax+Id=b,\; x\ge0,\; d\ge 0,
  \end{equation}
  where $d\in \mathbb{R}^m$ is the vector of slack variables and $I \in \mathbb{R}^{m \times m}$ is the identity matrix. Let us define $\tilde{A}:=[A|I]$, $\tilde{c}^T:=[c^T|0^T]$, and $\tilde{x}^T:=[x^T|d^T]$. We rewrite (\ref{eq:refor}) as
  \begin{equation}\label{eq:lp}
  \min\; \tilde{c}^T \tilde{x}\;\;\textrm{subject to}\;\; \tilde{A}\tilde{x}=b,\; \tilde{x}\ge0.
\end{equation}
 We similarly define $\tilde{r}^T:=[r^T|0^T]$. 
\begin{definition}
By a basis $B$ we mean an index set $B \subseteq \{1,\dots,n+m\}$ such that $\tilde{A}_B$ is nonsingular, where a subscript $B$ on a matrix (row vector) denotes
the submatrix (subvector) composed of columns indexed by $B$. That is, set $B$ is the set of indices associated with basic variables. Analogously, an index set $N:=\{1,\dots,n+m\} \setminus B$ indicates indices for  nonbasic variables and as a subscript it represents restriction to nonbasic indices. 
 \end{definition}

If the linear program (\ref{eq:lp}) admits a finite optimal value, there exists an optimal basic solution which corresponds to an optimal basis. A basis $B$ is an optimal basis of LP (\ref{eq:lp}) if and only if it satisfies the following conditions
\begin{subequations} \label{eq:basis}
 \begin{align}
     \tilde{A}_B^{-1}b &\ge 0,\\
     \tilde{c}_N^T-\tilde{c}_B^T\tilde{A}^{-1}_B\tilde{A}_N &\ge 0^T.\label{eq:opt}
 \end{align}
 \end{subequations}

Now let us recall the  assumptions under which the optimal set $\Omega_b$ of ILP$_b$ can be explicitly defined. 
\begin{definition} Let a basis $B$ be given. An ILP$_b$ problem is said to be
{\it B-stable}, if $B$ is an optimal basis of LP($b$) for all $b\in \mathbf{b}$. Furthermore, it is called {\it unique B-stable} if it is B-stable
and the optimal basis of LP($b$) is unique for all $b\in\mathbf{b}$.
\end{definition}
\noindent B-stability is a very important property in interval linear programming because it can simplify the description of  the optimal set. In the case of unique B-stability of  ILP$_b$, the optimal set ($\Omega_b$) can  be described by a polyhedral set. 
\begin{lemma}\label{lem:omega} \cite{beeck1978linear}
If (\ref{ILPb}) is {\it unique B-stable} with the optimal basis $B$, the optimal  set ($\Omega_b$) is described by the following linear system \footnote{We adopt Lemma \ref{lem:omega} from the results discussed in \cite{hladik2014determine} (see \cite{hladik2014determine} for more details). }
\begin{equation*}
\tilde{A}_B\tilde{x}_B\leq \overline{b}, \;\;-\tilde{A}_B\tilde{x}_B\le -\underline{b}, \;\; \tilde{x}_B \geq 0,\;\; \tilde{x}_N =0. 
\end{equation*}
\end{lemma}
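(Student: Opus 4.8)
The plan is to work with the standard-form reformulation (\ref{eq:lp}) and to prove the set equality in two moves: first show that under unique B-stability the optimal set $s(b)$ of each individual scenario collapses to a single point, and then match the two polyhedral descriptions via the change of variables $b=\tilde{A}_B\tilde{x}_B$, which is a bijection because $\tilde{A}_B$ is invertible. For the first move, fix $b\in\mathbf{b}$. Unique B-stability says $B$ is an optimal basis of LP$(b)$, so LP$(b)$ has a finite optimum and the basic point $\hat{x}$ with $\hat{x}_B=\tilde{A}_B^{-1}b$ and $\hat{x}_N=0$ is feasible (in particular $\tilde{A}_B^{-1}b\ge 0$) and optimal. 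I would then argue that $\hat{x}$ is the \emph{only} optimal solution: the feasible set of (\ref{eq:lp}) lies in the nonnegative orthant and is therefore a pointed polyhedron, so the optimal face $F=s(b)$ is pointed, and by the standing assumption that $\Omega_b$ is bounded, $F$ is bounded as well, hence a polytope. Every vertex of $F$ is a vertex of the feasible set — a basic feasible solution for some basis — and is optimal, so that basis is an optimal basis of LP$(b)$; uniqueness of the optimal basis forces it to be $B$, hence the vertex to be $\hat{x}$. A nonempty bounded polytope with only one vertex equals that vertex, so $F=\{\hat{x}\}$, and taking the union over $b\in\mathbf{b}$ gives
\[
\Omega_b=\bigl\{\tilde{x}:\tilde{x}_N=0,\ \tilde{x}_B=\tilde{A}_B^{-1}b\ \text{for some}\ b\in[\underline{b},\overline{b}]\bigr\}.
\]

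For the second move I would check the two inclusions. If $\tilde{x}\in\Omega_b$, then $\tilde{x}_N=0$ and $\tilde{x}_B=\tilde{A}_B^{-1}b$ for some $b\in[\underline{b},\overline{b}]$, so $\tilde{x}_B\ge 0$ and $\tilde{A}_B\tilde{x}_B=b$ obeys $\underline{b}\le\tilde{A}_B\tilde{x}_B\le\overline{b}$, which is exactly the system in the statement. Conversely, suppose $\tilde{x}$ satisfies $\tilde{A}_B\tilde{x}_B\le\overline{b}$, $-\tilde{A}_B\tilde{x}_B\le-\underline{b}$, $\tilde{x}_B\ge 0$, $\tilde{x}_N=0$, and set $b:=\tilde{A}_B\tilde{x}_B$, so that $b\in[\underline{b},\overline{b}]=\mathbf{b}$. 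By the first move, the unique optimal solution of LP$(b)$ has $B$-part $\tilde{A}_B^{-1}b=\tilde{A}_B^{-1}\tilde{A}_B\tilde{x}_B=\tilde{x}_B$ (and the hypothesis $\tilde{x}_B\ge 0$ certifies that this basic solution is the feasible one) and $N$-part $0=\tilde{x}_N$; hence it equals $\tilde{x}$, so $\tilde{x}\in s(b)\subseteq\Omega_b$.

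The inclusions themselves are routine once the first move is established — they only use that $\tilde{A}_B$ is invertible, so that ``$b$ ranges over the box $\mathbf{b}$'' is the same as ``$\tilde{A}_B\tilde{x}_B$ ranges over $\mathbf{b}$''. The step that really requires care, and the one I would expect a referee to probe, is the assertion that a \emph{unique} optimal basis forces a \emph{unique} optimal solution. This can fail: an LP whose optimal face is an unbounded ray has a single vertex (hence a single optimal basis) but infinitely many optimal solutions. What excludes this here is precisely the standing boundedness of $\Omega_b$, since such a ray would lie inside $s(b)\subseteq\Omega_b$ and make it unbounded.
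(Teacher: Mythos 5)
The paper never proves Lemma \ref{lem:omega}: it is imported from \cite{beeck1978linear} as presented in \cite{hladik2014determine}, so there is no internal argument to compare against. Your proof is the natural self-contained one and its structure is sound: reduce to showing that under unique B-stability each LP$(b)$ has the single optimal solution $\tilde{x}_B=\tilde{A}_B^{-1}b$, $\tilde{x}_N=0$, then translate ``$b$ ranges over $\mathbf{b}$'' into the stated inequalities via the invertible map $b=\tilde{A}_B\tilde{x}_B$; both inclusions are handled correctly, and you rightly observe that the converse inclusion needs only B-stability. You are also right -- and this is the genuinely delicate point -- that uniqueness of the optimal \emph{basis} does not by itself give uniqueness of the optimal \emph{solution}, and that the set equality as stated leans on the standing boundedness assumption of Remark 1 (a zero-cost problem with an unbounded optimal ray is unique B-stable, yet has optimal solutions with $\tilde{x}_N\neq 0$ that the system excludes), so invoking that assumption is legitimate rather than a cheat.

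One step should be tightened. From ``a vertex of the optimal face is a basic feasible solution for some basis and is optimal'' you conclude ``so that basis is an optimal basis.'' With the paper's definition (\ref{eq:basis}), an optimal basis must also satisfy the reduced-cost condition (\ref{eq:opt}), and under degeneracy a basis representing an optimal vertex can be primal feasible but dual infeasible, so the inference fails for an \emph{arbitrary} representing basis. What you need, and what is true, is the weaker claim that every optimal vertex admits at least one optimal basis: it is immediate under nondegeneracy, and in general follows, e.g., by perturbing the objective so that the chosen vertex becomes the unique optimum and extracting a basis that remains dual feasible along a sequence of vanishing perturbations. With that fact in place, uniqueness of the optimal basis does force every vertex of the optimal face to be $\tilde{A}_B^{-1}b$ with zero nonbasic part, and the rest of your argument goes through unchanged.
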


Another relevant topic in interval linear programming is determining the optimal value range, that is, the problem of finding the best and the worst optimal values among all the optimal values obtained over all data perturbations.
We define the optimal value range of ILP$_b$ (\ref{ILPb}) as
\begin{align}
    \underline{z}&:=\inf\; \{z(b):\; b \in \mathbf{b}\}, \label{eq:zlow1}\\
    \overline{z}&:=\sup\;\{ z(b):\; b \in \mathbf{b}\}\label{eq:zup1}.
\end{align}
Note that (\ref{eq:zlow1}) and (\ref{eq:zup1}) can assume any value, including infinity and infeasibility. The interval $[\underline{z},\overline{z}]$ then gives the optimal value range. By \cite{chinneck2000linear,vajda1961mathematical}, we know that for ILP$_b$ (\ref{ILPb})
\begin{align}
    \underline{z}&=\{\min\; c^Tx\;\; \textrm{subject to}\;\; Ax\le \overline{b},\; x\ge 0\},  \label{eq:zlow2}\\
    \overline{z}&=\{\min\; c^Tx\;\; \textrm{subject to}\;\;  Ax \le \underline{b},\; x \ge 0\}\label{eq:zup2}.
\end{align}

Now we analyze the computational complexity of the outcome range problem. Specifically, Theorem \ref{NP-hard} assesses the computational complexity of ORP$_b$.  Proposition \ref{sovable1} considers a special case of ORP$_b$ which is polynomially solvable. Finally, Proposition \ref{propo:poly2} and Corollary \ref{coro:eq} investigate another polynomially solvable case by exploiting a relation between ORP$_b$ and the optimal value range problem.
\begin{theorem} \label{NP-hard}
Problem ORP$_b$ is NP-hard.
\end{theorem}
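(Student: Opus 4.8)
The plan is to prove NP-hardness by a polynomial-time reduction from a known NP-hard problem to the decision version of ORP$_b$: given an ILP$_b$ instance~(\ref{ILPb}), an outcome vector $r$, and a rational threshold $t$, decide whether $\overline{f}\ge t$. Treating $\overline{f}$ suffices, since the corresponding statement for $\underline{f}$ follows by replacing $r$ with $-r$. The reduction must necessarily output an instance that is \emph{not} B-stable: in the B-stable case the optimal set $\Omega_b$ is a single polyhedron (Lemma~\ref{lem:omega}), so ORP$_b$ collapses to a linear program (this is essentially the content of Proposition~\ref{sovable1}, with a related easy case handled by Proposition~\ref{propo:poly2}). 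In general, however, $\Omega_b$ decomposes as a union $\bigcup_{B}P_B$ of polyhedral pieces indexed by the bases $B$ that are optimal for some scenario $b\in\mathbf{b}$, with $P_B=\{\tilde{x}:\tilde{x}_N=0,\ \underline{b}\le\tilde{A}_B\tilde{x}_B\le\overline{b},\ \tilde{x}_B\ge0\}$; this union may have exponentially many members and $\overline{f}=\max_{B}\max\{r^Tx:x\in P_B\}$, so the difficulty is to select the right basis.

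I would first attempt the shortest route: to observe that ORP$_b$ contains \emph{linear bilevel programming} --- a strongly NP-hard problem --- as a special case. Indeed ORP$_b$ is the (optimistic) bilevel program in which the leader's only decision is the right-hand side $b$ of the follower's program LP$(b)$, the leader's feasible set is the box $\mathbf{b}$, and the leader maximizes the linear function $r^Tx$ of the follower's optimal response. The point to check is that an arbitrary linear bilevel instance can be reshaped into this restricted form: the leader's own objective, the leader's constraints, and the fact that in general the leader perturbs the follower's right-hand side through an affine map rather than controlling it directly, must all be simulated inside the follower's LP and the interval box using slack variables and big-$M$ penalties. Should this simulation prove delicate, the fallback --- and likely the cleaner self-contained argument --- is a direct reduction from a classical NP-complete problem such as \textsc{Partition} or \textsc{3-SAT}: for each element (resp.\ clause) of the source instance one builds a small block of variables and inequalities of ILP$_b$ whose optimal bases over $\mathbf{b}$ are in bijection with the admissible local choices (include/exclude, true/false), links the blocks by a handful of global inequalities, and lets $r$ encode the source objective, so that $\overline{f}$ reaches a prescribed value exactly when the source instance is a yes-instance.

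The main obstacle, in either route, is that $\Omega_b$ has no convenient explicit description outside the B-stable case, so the core of the argument is an instance-specific analysis of $\Omega_b$ for the constructed ILP$_b$. One has to verify both that (i) every admissible configuration of the source instance gives rise to an optimal basis whose piece $P_B$ contributes the intended point, with the intended value of $f$, and --- the more delicate part --- that (ii) no \emph{spurious} basis (one that happens to be optimal for some $b\in\mathbf{b}$ but does not encode a legitimate configuration) yields a piece of $\Omega_b$ with a larger outcome value; ruling out (ii) is where the gadget must be engineered carefully. The natural tools are the characterization of optimal bases recalled in~(\ref{eq:basis}), which determines exactly which bases are optimal for which right-hand sides, together with the complementarity description that a feasible point $x$ (with $x\ge0$, $Ax\le\overline{b}$) lies in $\Omega_b$ if and only if there is a multiplier vector $\mu\ge0$ with $c+A^T\mu\ge0$, $x^T(c+A^T\mu)=0$, and $\mu_i>0\Rightarrow (Ax)_i\ge\underline{b}_i$ --- it is this last, disjunctive, family of conditions that makes maximizing a linear function over $\Omega_b$ behave like a program with complementarity constraints, and hence hard.
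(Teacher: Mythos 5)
There is a genuine gap: what you have written is a plan with two candidate routes, but neither is carried out, and the hard content of the theorem lies precisely in the part you defer. For the bilevel route, the claim that ORP$_b$ ``contains linear bilevel programming as a special case'' is not established and is not obvious: in ORP$_b$ the leader controls only the right-hand side, confined to a box, and has no objective terms in its own variables, whereas the standard NP-hardness constructions for bilevel LP use leader variables that enter the follower's problem and the leader's objective in more general ways. You acknowledge that this must be ``simulated inside the follower's LP and the interval box using slack variables and big-$M$ penalties,'' but that simulation is exactly the reduction, and you do not give it; there is also the unaddressed issue that $\Omega_b$ is the union of optimal solutions over \emph{all} scenarios $b\in\mathbf{b}$, not the optimal-response set for a leader-chosen $b$ satisfying leader-side constraints, so coupling constraints on $b$ cannot simply be imposed. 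For the fallback route (\textsc{Partition}/3-SAT), no gadget is constructed and neither of your verification obligations (i)--(ii) is discharged; as you yourself note, ruling out spurious optimal bases is where such a construction must be engineered carefully, and without it there is no proof.

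For comparison, the paper takes a much shorter route that avoids gadget engineering altogether: it starts from the known NP-hardness of computing the exact interval hull of the optimal set $\Xi$ of a Type~I interval LP with interval right-hand side (Theorem~7 in \cite{garajova2019optimal}), rewrites the equality system $Ax=\mathbf{b}$ as the paired inequalities $Ax\le\mathbf{b}$, $-Ax\le-\mathbf{b}$ (the optimal set being preserved by Theorem~2 in \cite{garajova2018interval}), so that the resulting problem is an ILP$_b$ with $\Omega_b=\Xi$, and then observes that solving ORP$_b$ with the outcome functions $f(x)=x_i$, $i=1,\dots,n$, computes exactly the interval hull of $\Omega_b$. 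Hence ORP$_b$ inherits NP-hardness directly from a known result about the optimal set, with no need to analyze optimal bases or complementarity structure of a bespoke instance. If you wish to salvage your approach, the honest statement is that you have identified why the problem is plausibly hard (disjunctive complementarity structure, exponentially many basis pieces of $\Omega_b$), but a complete proof still requires either an explicit, verified reduction or an appeal to an existing hardness result of the kind the paper uses.
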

\begin{proof}
We proceed by a different interval-related problem which is known to be NP-hard. Let us consider an ILP problem of Type I with a fixed coefficient matrix and a fixed objective vector (i.e., fixed $A$ and $c$), i.e.,
\begin{equation}
\label{type1}
\min\; c^Tx \;\;\text{subject to}\;\;Ax=\mathbf{b},\;\; x\geq 0.
\end{equation}
Let $\Xi$ be the optimal set of (\ref{type1}). By Theorem 7 in \cite{garajova2019optimal} (p. 282), we know that computing the exact interval hull of $\Xi$ is NP-hard. Now let us reformulate problem (\ref{type1}) as follows
\begin{equation}
\label{reform}
 \min\; c^Tx \;\;\text{subject to}\;\;Ax\le\mathbf{b},\;\;-Ax\le-\mathbf{b},\;\; x\geq 0.
\end{equation}
We know by Theorem 2 in \cite{garajova2018interval} (p. 606) that the optimal set of (\ref{reform}) is equal to the optimal set of (\ref{type1}). For the sake of  simplicity, let us introduce the following notation

$$A^{\prime}:=
\left[
\begin{array}{c}
A\\
\hline
-A
\end{array}
\right],\;\;\mathbf{b}^{\prime}:=
\left[
\begin{array}{c}
\mathbf{b}\\
\hline
-\mathbf{b}
\end{array}
\right].$$
We then can rewrite the problem (\ref{reform}) as an ILP$_b$, that is,
$$
 \min \;c^Tx \;\;\text{subject to}\;\;A^{\prime}x\leq\mathbf{b}^{\prime},\;\; x\geq 0.
$$
Therefore, we can conclude that $\Xi=\Omega_b$. As a result, we can say that computing the exact interval hull of $\Omega_b$ is also NP-hard.  Now if we consider $f(x)=x_i$, for any $i\in \{1,\dots,n\}$, we can conclude that ORP$_b$ is NP-hard.
\end{proof}

\begin{proposition} \label{sovable1} \label{B_L}
If ILP$_b$ is unique B-stable, then ORP$_b$ is polynomially solvable.
\end{proposition}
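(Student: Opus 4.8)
The plan is to exploit unique B-stability to replace the implicitly defined set $\Omega_b$ by the explicit polyhedral description of Lemma \ref{lem:omega}, and then to observe that computing $\underline{f}$ and $\overline{f}$ amounts to minimizing and maximizing a linear function over that polyhedron, i.e.\ solving two ordinary linear programs, each of which is solvable in polynomial time.

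First I would recover the optimal basis $B$. Since ILP$_b$ is unique B-stable, $B$ is the (unique) optimal basis of LP($b$) for every $b\in\mathbf{b}$; in particular it is the optimal basis of LP($\underline{b}$) (any fixed scenario would do). Solving this single, perfectly ordinary linear program with a polynomial-time algorithm that returns a basic optimal solution --- e.g.\ an interior-point method followed by a crossover/basis-identification step, or the ellipsoid method --- produces $B$ in polynomial time in the input size. With $B$ in hand, Lemma \ref{lem:omega} gives
\begin{equation*}
\Omega_b=\{\tilde{x}\ :\ \underline{b}\le \tilde{A}_B\tilde{x}_B\le\overline{b},\ \tilde{x}_B\ge 0,\ \tilde{x}_N=0\}.
\end{equation*}
Because $\tilde{x}_N=0$ on $\Omega_b$ and $\tilde{r}^T=[\,r^T\mid 0^T\,]$, we have $f(x)=\tilde{r}^T\tilde{x}=\tilde{r}_B^T\tilde{x}_B$ for all $\tilde{x}\in\Omega_b$, so that $\underline{f}$ and $\overline{f}$ are the optimal values of the two linear programs
\begin{align*}
\underline{f}&=\min\ \tilde{r}_B^T\tilde{x}_B\quad\text{s.t.}\quad \underline{b}\le \tilde{A}_B\tilde{x}_B\le\overline{b},\ \tilde{x}_B\ge 0,\\
\overline{f}&=\max\ \tilde{r}_B^T\tilde{x}_B\quad\text{s.t.}\quad \underline{b}\le \tilde{A}_B\tilde{x}_B\le\overline{b},\ \tilde{x}_B\ge 0,
\end{align*}
each over $m$ variables. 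Since $\tilde{A}_B$ is nonsingular, the feasible set equals $\bigl(\tilde{A}_B^{-1}[\underline{b},\overline{b}]\bigr)\cap\mathbb{R}^m_{\ge 0}$, which is bounded; it is also nonempty, since $\tilde{A}_B^{-1}\underline{b}\ge 0$ holds for the optimal basis $B$. Hence both programs attain finite optima, and solving them with any polynomial-time LP algorithm yields $\{\underline{f},\overline{f}\}$, which completes the proof.

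The content of the argument, and what I would emphasize, is that unique B-stability is exactly the hypothesis that (i) makes the polyhedral description in Lemma \ref{lem:omega} valid and (ii) lets the single basis $B$ be read off from one scenario; absent it, $\Omega_b$ need not be polyhedral and this reduction fails (indeed Theorem \ref{NP-hard} shows the general case is NP-hard). Everything else --- one LP solve to obtain $B$, two LP solves to obtain $\underline{f}$ and $\overline{f}$ --- is routine, so there is no real obstacle once Lemma \ref{lem:omega} is invoked; the only care needed is the standard one of ensuring the LP solver returns an actual basis rather than merely an optimal point.
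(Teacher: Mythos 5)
Your argument is essentially the paper's proof: invoke Lemma \ref{lem:omega} to get the polyhedral description of $\Omega_b$ under unique B-stability and reduce ORP$_b$ to the two linear programs (\ref{eq:bstabl})--(\ref{eq:bstabu}), each polynomially solvable. The extra detail you supply --- recovering the basis $B$ from a single scenario via a polynomial-time LP solve and checking nonemptiness/boundedness of the resulting feasible set --- is a correct elaboration of what the paper leaves implicit, not a different route.
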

\begin{proof}
Let the basis $B$ be the unique optimal basis for all the data realizations, then based on Lemma \ref{lem:omega}, ORP$_b$ is equivalent to solving the two following linear programs
\begin{align}
\underline{f}&=\{\min\;\tilde{r}^T_B\tilde{x}_B\;\;\text{subject to}\;\;\tilde{A}_B\tilde{x}_B\leq \overline{b}, \;\;-\tilde{A}_B\tilde{x}_B\le -\underline{b}, \;\; \tilde{x}_B \geq 0,\;\; \tilde{x}_N =0\}, \label{eq:bstabl}\\
\overline{f}&=\{\max\;\tilde{r}^T_B\tilde{x}_B\;\;\text{subject to}\;\;\tilde{A}_B\tilde{x}_B\leq \overline{b}, \;\;-\tilde{A}_B\tilde{x}_B\le -\underline{b}, \;\; \tilde{x}_B \geq 0,\;\; \tilde{x}_N =0\}.\label{eq:bstabu}
\end{align}	
\end{proof}

Let $\mathscr{B}$ be the set of all optimal bases of ILP$_b$ (if any).
Let us consider a given $B \in \mathscr{B}$. We can associate with it, by (\ref{eq:opt}), a cone containing all the cost vectors that are optimal for $B$, that is, $H^B:=\{\psi \in \mathbb{R}^{m+n}:\; \psi_{N}^{ T}-\psi_{B}^{T}\tilde{A}_{B}^{-1}\tilde{A}_{N} \ge 0^T\}$. We define $\mathscr{C}$ as the intersection of all the cones containing all the cost vectors that are optimal for all the optimal  bases, i.e.,
$$\mathscr{C}=\bigcap_{B \in \mathscr{B}} H^B.$$
The following proposition states another polynomially solvable case of ORP$_b$ by leveraging a relation with the optimal value range problem.

\begin{proposition}
\label{propo:poly2}
Suppose that $\underline{z}$ and $\overline{z}$ are finite values. If $r$ is such that $r\in \mathcal{C}$, then ORP$_b$ is polynomially solvable.
\end{proposition}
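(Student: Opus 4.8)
The plan is to show that the hypothesis $r\in\mathcal C$ forces the outcome function $f(x)=r^Tx$ to be constant on each optimal face $s(b)$, $b\in\mathbf b$, with the constant value equal to the optimal value $z_r(b)$ of the linear program $\min\{r^Tx:\ Ax\le b,\ x\ge0\}$. Granting this, problems (\ref{best-b})--(\ref{worst-b}) reduce to the optimal value range of the interval linear program obtained from ILP$_b$ by replacing the cost $c$ with $r$, and by (\ref{eq:zlow2})--(\ref{eq:zup2}) this amounts to solving two ordinary linear programs; this is the relation with the optimal value range problem alluded to above.

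First I would record that the optimality condition (\ref{eq:opt}) does not involve the right-hand side, so a basis $B$ is an optimal basis of LP$(b)$ for some $b\in\mathbf b$ exactly when $\tilde c\in H^B$ and $\tilde A_B^{-1}b\ge0$ for some $b\in\mathbf b$; in particular, every basis that is optimal for some LP$(b)$, $b\in\mathbf b$, belongs to $\mathscr B$. Now fix $b\in\mathbf b$ with $s(b)\ne\emptyset$. By the standing assumption that $\Omega_b$ is bounded (the Remark), the optimal face $s(b)\subseteq\Omega_b$ is a bounded polytope, hence the convex hull of its extreme points; each extreme point of $s(b)$ is the basic solution of some basis $B$ optimal for LP$(b)$, so $B\in\mathscr B$ and thus $\tilde r\in\mathcal C\subseteq H^B$ (where, as in the definition of $\mathcal C$, the hypothesis $r\in\mathcal C$ is read as a condition on the lifted vector $\tilde r$). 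Then $B$ also satisfies the optimality condition for the cost vector $\tilde r$, so that extreme point is optimal for $\min\{r^Tx:\ Ax\le b,\ x\ge0\}$, whose optimal value $z_r(b)$ is therefore finite and is attained at every extreme point of $s(b)$; by convexity $f\equiv z_r(b)$ on the whole of $s(b)$.

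Next, since $\Omega_b=\bigcup_{b\in\mathbf b}s(b)$ and $f$ equals $z_r(b)$ throughout $s(b)$, we get $\overline f=\sup\{z_r(b):\ b\in\mathbf b,\ s(b)\ne\emptyset\}$ and $\underline f=\inf\{z_r(b):\ b\in\mathbf b,\ s(b)\ne\emptyset\}$. The map $b\mapsto z_r(b)$ is antitone for the componentwise order, because enlarging the right-hand side enlarges the feasible polyhedron and cannot increase the minimum. Moreover $z(\underline b)=\overline z$ and $z(\overline b)=\underline z$ are finite by hypothesis, so LP$(\underline b)$ and LP$(\overline b)$ have finite optima and the scenarios $\underline b$, $\overline b$ belong to the index sets above. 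Hence the supremum is attained at $b=\underline b$ and the infimum at $b=\overline b$, giving
\begin{align*}
\overline f&=\{\min\;r^Tx\ \ \text{subject to}\ \ Ax\le\underline b,\ x\ge0\},\\
\underline f&=\{\min\;r^Tx\ \ \text{subject to}\ \ Ax\le\overline b,\ x\ge0\}.
\end{align*}
Both are linear programs, so ORP$_b$ is polynomially solvable.

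The delicate point is the constancy claim of the second paragraph: one must check that every extreme point of the optimal face $s(b)$ comes from a basis in $\mathscr B$, so that $r\in\mathcal C$ can legitimately be applied there, and one must use the standing boundedness of $\Omega_b$ to rule out an unbounded direction of $s(b)$ along which $f$ could fail to equal $z_r(b)$. The remaining steps --- monotonicity of $z_r$ and its evaluation at $\underline b$ and $\overline b$ --- are routine once the finiteness of $\underline z$ and $\overline z$ is used.
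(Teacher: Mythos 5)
Your proof is correct and follows essentially the same route as the paper's: the hypothesis $r\in\mathcal{C}$ makes every optimal solution of LP$(b)$ also optimal for the cost $r$ over the same feasible set, and monotonicity in the right-hand side then reduces $\underline{f}$ and $\overline{f}$ to the two linear programs with $\overline{b}$ and $\underline{b}$, exactly as in the paper. The only difference is that you explicitly justify, via optimal bases at the extreme points of the optimal face and the boundedness of $\Omega_b$, the constancy step that the paper asserts directly from $r\in\mathcal{C}$.
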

\begin{proof}
Let us recall that the optimal value range of ILP$_b$ is polynomially solvable, that is,
$$
[\text{P}_1]: \;\underline {z}=\{\min\; c^T x\;\;\text {subject to}\;\;\; Ax\leq\overline{b},\;x\geq0\},\quad \quad [\text{P}_2]:\; \overline{z}=\{\min\; c^T x\;\;\text {subject to}\;\;\; Ax\leq\underline{b},\;x\geq0\},
$$
 and that ORP$_b$ consists in solving the following two optimization problems
$$
[\text{P}_3]:\; \underline {f}=\{\min\; r^T x\;\;\text {subject to}\;\;\; x\in \Omega_b\},\quad \quad [\text{P}_4]:\; \overline{f}=\{\max\; r^T x\;\;\text{subject to}\;\;\; x\in \Omega_b\}.
$$
From the hypothesis, we know that $\underline{z}$ is a finite value. Let $x^{*}$ be an optimal solution of P$_1$, i.e., $\underline{z} = c^Tx^{*}$. By definition, we know that $x^{*}\in \Omega_b$. Since $r\in \mathcal{C}$, we can write
$$
 r^Tx^*=\{\min\;\ r^T x\;\;\text {subject to}\;\;\; Ax\leq\overline{b},\;x\geq0\}.
$$ 
Let us now consider a generic  $\hat{x} \in \Omega_b$ in P$_3$, which is an optimal solution of the linear program associated with a scenario $\hat{b} \in \mathbf{b}$. Again, given $r\in \mathcal{C}$, we have
$$
 r^T\hat{x}=\{\min\;\ r^T x\;\;\text {subject to}\;\;\; Ax\leq\hat{b},\;x\geq0\}.
$$
Since $\hat{b}\le \overline{b}$ and $A\hat{x}\le \overline{b}$, we can say $r^T\hat{x}\geq r^Tx^{*}$. This is true for any vector $\hat{x} \in \Omega_b$, and thus $x^{*} $ is also an optimal solution to P$_3$. Therefore, we can compute $\underline{f}$ by
$$
\underline {f}=\{\min\; r^T x\;\;\text {subject to}\;\;\; Ax\leq\overline{b},\;x\geq0\},
$$
which is polynomially solvable. We can use a similar argument for P$_2$ and P$_4$.
\end{proof}
Now it is easy to see that the following special case of Proposition \ref{propo:poly2} holds. 
\begin{corollary}\label{coro:eq} 
Suppose that $f(x)=c^Tx$. If $\underline{z}$ and $\overline{z}$ are finite values, then we have $[\underline{f},\overline{f}]=[\underline{z},\overline{z}]$.
\end{corollary}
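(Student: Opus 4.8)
\noindent\textit{Proof proposal.} The plan is to obtain the statement as the announced special case of Proposition~\ref{propo:poly2}, so the first step is to verify its hypothesis for $r=c$. Here $f(x)=c^Tx$ means $r=c$, hence $\tilde r=\tilde c$. Recall $\mathcal{C}=\bigcap_{B\in\mathscr{B}}H^B$ with $H^B=\{\psi\in\mathbb{R}^{m+n}:\ \psi_N^T-\psi_B^T\tilde A_B^{-1}\tilde A_N\ge 0^T\}$. Every $B\in\mathscr{B}$ is, by definition, an optimal basis of LP$(b)$ for at least one scenario $b\in\mathbf{b}$, so it satisfies the reduced-cost optimality condition (\ref{eq:opt}), namely $\tilde c_N^T-\tilde c_B^T\tilde A_B^{-1}\tilde A_N\ge 0^T$ — and that condition does not involve $b$ at all. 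Hence $\tilde c\in H^B$ for every $B\in\mathscr{B}$, i.e.\ $\tilde c\in\mathcal{C}$, and Proposition~\ref{propo:poly2} applies.

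Next I would simply read off the sharper (interval-equality) conclusion from the proof of Proposition~\ref{propo:poly2}. That proof shows that, when $r\in\mathcal{C}$, the value $\underline f$ of P$_3$ equals the optimal value of $\min\ r^Tx$ subject to $Ax\le\overline b,\ x\ge0$; specializing $r=c$, this is precisely the linear program (\ref{eq:zlow2}), whose value is $\underline z$, so $\underline f=\underline z$. The symmetric part of the same proof (handling P$_2$ and P$_4$) gives that $\overline f$ equals the optimal value of $\min\ c^Tx$ subject to $Ax\le\underline b,\ x\ge0$, which by (\ref{eq:zup2}) is $\overline z$; hence $\overline f=\overline z$, and therefore $[\underline f,\overline f]=[\underline z,\overline z]$.

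As a cross-check one can argue directly, bypassing the cone $\mathcal{C}$. For any $x\in\Omega_b$ there is $b\in\mathbf{b}$ with $x\in s(b)$, whence $f(x)=c^Tx=z(b)$; since $\underline z$ and $\overline z$ are finite, no scenario is infeasible or unbounded, so $z(b)$ is finite and $z(b)\in[\underline z,\overline z]$. Conversely, by (\ref{eq:zlow2}) an optimal solution $x^{*}$ of $\min\ c^Tx$ subject to $Ax\le\overline b,\ x\ge0$ is an optimal solution of LP$(\overline b)$, so $x^{*}\in\Omega_b$ with $f(x^{*})=z(\overline b)=\underline z$; likewise, by (\ref{eq:zup2}), an optimal solution of LP$(\underline b)$ lies in $\Omega_b$ and attains $\overline z$. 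Thus $\{f(x):x\in\Omega_b\}=\{z(b):b\in\mathbf{b}\}$, and taking the minimum and the maximum of both sides — both attained, at $b=\overline b$ and $b=\underline b$ respectively — yields $\underline f=\underline z$ and $\overline f=\overline z$.

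I do not expect a genuine obstacle here: this is a corollary and the argument is essentially bookkeeping. The only points that require a little care are the dimensional matching (working with $\tilde r,\tilde c$ and with $\mathcal{C}\subseteq\mathbb{R}^{m+n}$ rather than with $r,c\in\mathbb{R}^n$); the fact that the infimum and supremum defining $\underline z$ and $\overline z$ in (\ref{eq:zlow1})--(\ref{eq:zup1}) are actually attained — which is guaranteed by the finiteness hypothesis together with the standing assumption that $\Omega_b$ is nonempty and bounded — so that the equality of the two \emph{closed} intervals is literal; and checking that the ``similar argument'' invoked for the maximum side in the proof of Proposition~\ref{propo:poly2} does indeed specialize to the linear program (\ref{eq:zup2}).
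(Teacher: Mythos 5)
Your proposal is correct and follows essentially the paper's own route: the paper obtains the corollary as a special case of Proposition~\ref{propo:poly2}, and you supply exactly the needed details, namely that $\tilde c$ satisfies condition (\ref{eq:opt}) for every $B\in\mathscr{B}$ and hence lies in $\mathcal{C}$, and that the proof of the proposition, specialized to $r=c$, reduces $\underline f$ and $\overline f$ to the linear programs (\ref{eq:zlow2}) and (\ref{eq:zup2}), giving $\underline f=\underline z$ and $\overline f=\overline z$. Your direct cross-check showing $\{f(x):x\in\Omega_b\}=\{z(b):b\in\mathbf{b}\}$ is a correct self-contained alternative that bypasses the cone $\mathcal{C}$, but it is not required.
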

Note that even if we assume that $f(x)=c^Tx$, the outcome range problem is not equivalent to the optimal value range problem in general. Below, we illustrate this by an example.
\begin{exmp}\label{example:counter1}
Consider the following ILP$_b$ problem
$$\min \; -4x_2\;\;\text{subject to}\;\;x_1+x_2\le[-1,5],\;x_1,x_2\ge0,$$
and  let $f(x)=-4x_2$ also be an outcome function. By (\ref{eq:zup2}), it is easy to see that $\overline{z}$ is infeasible, and by applying (\ref{eq:zlow2}) we get $\underline{z}=-20$. Therefore, $[-20,\infty]$ gives the optimal value range.\footnote{We denote infeasibility by the convention $\min\;\emptyset=\infty$.} 
\begin{figure} [h!] 
\centering
    \includegraphics[scale=1]{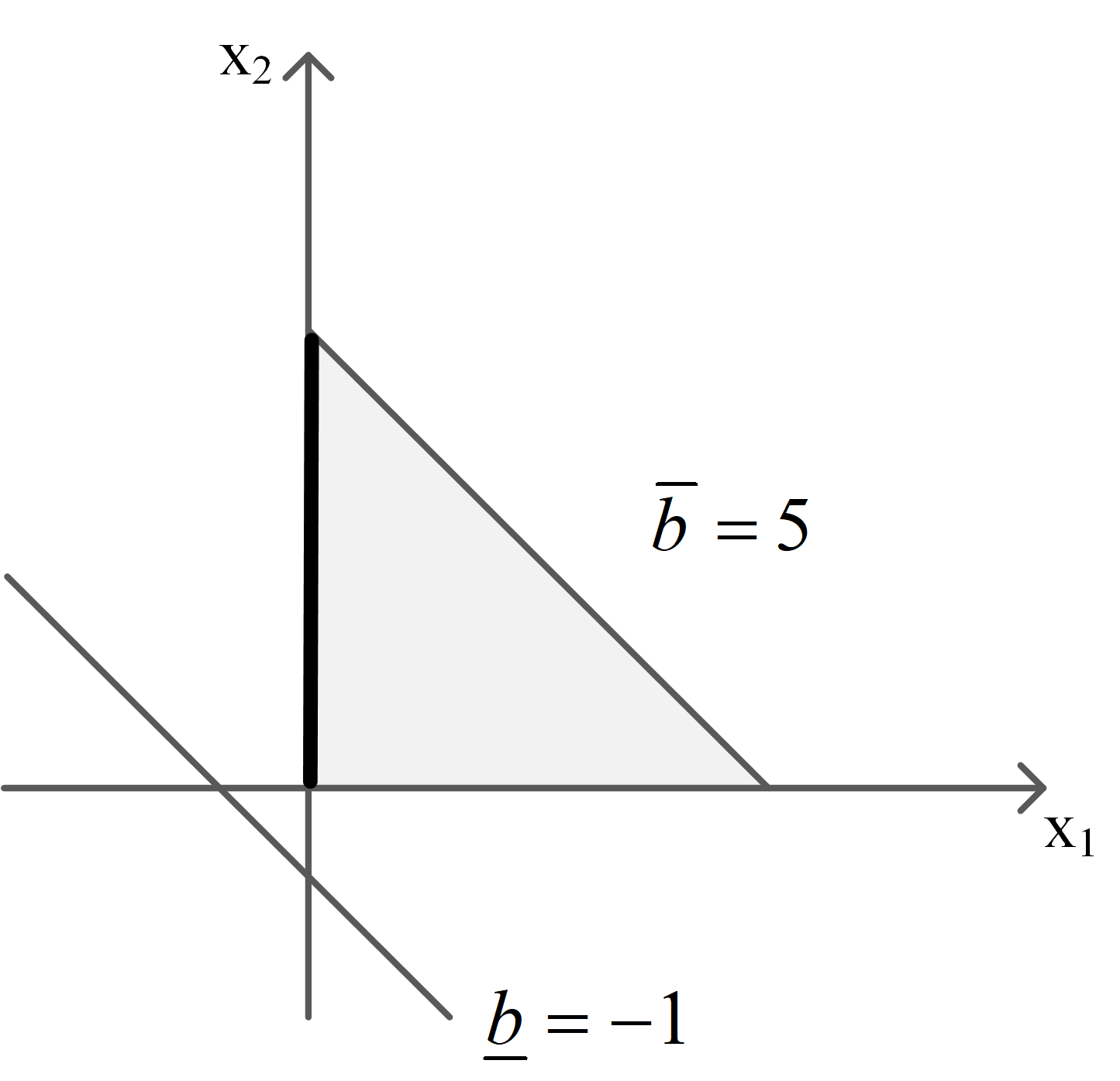}
\caption{(Example \ref{example:counter1}) Union of all feasible sets in light gray; set of all optimal solutions in bold.}
\label{fig:counter1}
\end{figure}
However, from Figure \ref{fig:counter1}, it is not hard to observe that $\underline{f}=-20$ (scenario $b=5$) and $\overline{f}=0$ (scenario $b=0$). Hence, the outcome function $f(x)$ ranges in the interval $[-20,0]$, which is different from the optimal value range.  
\end{exmp}
Corollary \ref{coro:eq} and Example \ref{example:counter1} indeed imply that the outcome range problem can be seen as a generalized form of a special case of the optimal value range problem \cite{hladik2018worst}.
\section{Properties of the Outcome Range Problem (ORP$_b$)}\label{sec:prop}
In this section, we study some theoretical properties of ORP$_b$ aimed at characterizing the scenarios corresponding to the optimal values of (\ref{best-b}) and (\ref{worst-b}). Throughout this section we only report results related to the computation of $\underline{f}$. All the results are applicable to the computation of $\overline{f}$ as well. Let us introduce some definitions first.
\begin{definition} \label{def:pos}
 A given scenario $b \in \mathbf{b}$ is referred to as
\begin{enumerate}[label=(\roman*)]
\item \textit{a middle scenario if}
$$\underline{b}_i < b_i < \overline{b}_i,\;\forall i\in\{1,\dots,m\}.$$
\item \textit{a weakly extremal scenario if}  
$$ b_i=\underline{b}_i \; \vee\; b_i=\overline{b}_i,\; \textrm{for some}\; i\in\{1,\dots,m\}.$$
\item \textit{a strongly extremal scenario if} 
$$ b_i=\underline{b}_i \; \vee\; b_i=\overline{b}_i,\; \forall i\in\{1,\dots,m\}.$$
\end{enumerate}
\end{definition}
\noindent Note that, according to the above definition, a strongly extremal scenario is also a weakly extremal scenario, but the opposite does not hold true. From the geometrical standpoint, given a hypercube $\mathbf{b}$, a middle scenario is in the interior of the hypercube, a weakly extremal scenario is on the boundary of the hypercube, and a strongly extremal scenario is a vertex of the hypercube.
\begin{definition}
$b^* \in \mathbf{b}$ is  an {\it optimal scenario} of (\ref{best-b}) if $\underline{f}=f(x^*)$, where $x^* \in s(b^*)$.
\end{definition}
\begin{definition}\label{def:global}
Given an optimal scenario $b^*$ for (\ref{best-b}), an  optimal basis $B^*$ of the linear program LP$(b^*)$ is a {\it global optimal basis} of (\ref{best-b}).
\end{definition} 
\begin{remark}\label{rem:glob}
Note that, given a global optimal basis $B^*$ of (\ref{best-b}), the optimal value $\underline{f}$ and the optimal scenario $b^*$ are the optimal value and an optimal solution, respectively,  of the following linear program
\begin{equation*}
    \min\; \tilde{r}^T_{B^*}\tilde{A}^{-1}_{B^*}b\;\;\text{subject to}\;\; \tilde{A}^{-1}_{B^*}b \ge 0,\; b\in \mathbf{b},
\end{equation*}
in variables $b$.
\end{remark}

The results to follow identify conditions to characterize the optimal scenario $b^*$ either as a middle or a weakly (strongly) extremal scenario. 
\begin{proposition}\label{prop:weak}
If $(0,\dots,0)^T \notin \mathbf{b}$, then there exists a weakly extremal scenario $\hat{b}$ such that $b^*=\hat{b}$.
\end{proposition}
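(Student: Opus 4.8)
The plan is to reduce the statement to a property of the linear program in Remark~\ref{rem:glob}, which expresses $\underline{f}$ as the optimal value of a linear program in the variable $b$ over the feasible region $P_{B^*} := \{b \in \mathbf{b} : \tilde{A}^{-1}_{B^*} b \ge 0\}$, where $B^*$ is a global optimal basis of (\ref{best-b}). Let $b^*$ be an optimal scenario and $B^*$ a corresponding global optimal basis. The objective of that linear program is the linear functional $\varphi(b) := \tilde{r}^T_{B^*}\tilde{A}^{-1}_{B^*} b$. First I would argue that it suffices to exhibit \emph{some} optimal solution $\hat b$ of this linear program that is weakly extremal, since by Remark~\ref{rem:glob} any optimal solution of it is an optimal scenario for (\ref{best-b}), giving $\underline{f} = f(\hat x)$ with $\hat x \in s(\hat b)$.

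Next I would use the hypothesis $(0,\dots,0)^T \notin \mathbf{b}$. Since $\mathbf{b} = [\underline b, \overline b]$ is a box not containing the origin, there is a coordinate $k$ with either $\underline b_k > 0$ or $\overline b_k < 0$; in other words, $0 \notin [\underline b_k, \overline b_k]$ for that $k$. The key geometric observation is that the constraint cone $\{b : \tilde{A}^{-1}_{B^*} b \ge 0\}$ is a cone with apex at the origin, so it cannot ``wrap around'' a box that stays strictly on one side of some coordinate hyperplane in a way that forces the optimum into the interior of the box. Concretely, I would take any optimal solution $b^0$ of the linear program in Remark~\ref{rem:glob} (one exists, since $b^*$ itself is feasible and the problem has finite optimum $\underline f$), and show it can be pushed to the boundary of $\mathbf{b}$ without increasing $\varphi$ and without leaving $P_{B^*}$. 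The natural way to do this is to consider the ray from the origin through $b^0$, i.e. the segment $\{t b^0 : t \in [t_{\min}, t_{\max}]\}$ where $t_{\min}, t_{\max}$ are the extreme scalings keeping $t b^0 \in \mathbf{b}$. Because $b^0$ lies in the cone $\tilde A^{-1}_{B^*} b \ge 0$ and that condition is invariant under positive scaling, every $t b^0$ with $t > 0$ still satisfies $\tilde A^{-1}_{B^*}(t b^0) = t \tilde A^{-1}_{B^*} b^0 \ge 0$, so the whole segment lies in $P_{B^*}$. The coordinate $k$ with $0 \notin [\underline b_k, \overline b_k]$ guarantees that this segment is genuinely bounded (the scaling $t$ cannot run to $0$ or $\infty$ while keeping the $k$-th coordinate in its interval away from $0$), so $t_{\min} > 0$ and $t_{\max} < \infty$ and both endpoints are attained on the boundary of $\mathbf{b}$. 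Since $\varphi$ is linear in $b$, it is linear (hence monotone) in $t$ along this segment, so its minimum over $[t_{\min}, t_{\max}]$ is attained at one of the two endpoints; call that endpoint $\hat b$. Then $\hat b \in P_{B^*}$, $\varphi(\hat b) \le \varphi(b^0) = \underline f$, and $\hat b$ lies on the boundary of $\mathbf{b}$, i.e. $\hat b_i \in \{\underline b_i, \overline b_i\}$ for at least one $i$ — so $\hat b$ is weakly extremal. Combined with optimality of $b^0$ we get $\varphi(\hat b) = \underline f$, so $\hat b$ is an optimal scenario, proving the claim.

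The step I expect to require the most care is verifying that the segment $\{t b^0 : t \ge 0\} \cap \mathbf{b}$ is nonempty, bounded, and has both endpoints on $\partial \mathbf{b}$: one has to check that $b^0 \ne 0$ (which follows because $0 \notin \mathbf{b}$ and $b^0 \in \mathbf{b}$), and that the scalar interval of admissible $t$ is a nondegenerate closed bounded interval $[t_{\min}, t_{\max}]$ with $1 \in [t_{\min}, t_{\max}]$, $0 < t_{\min} \le t_{\max} < \infty$. The boundedness and positivity of $t_{\min}, t_{\max}$ is exactly where the hypothesis $0 \notin \mathbf{b}$ is used — if the box contained the origin, the ray from $0$ through $b^0$ could have $t$ ranging over an unbounded set or down to $0$, the cone argument would still place the whole ray in $P_{B^*}$, but there would be no guarantee the minimizing $t$ lies on $\partial\mathbf{b}$ rather than in the interior. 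I would also double check the degenerate sub-case where $b^0$ is already weakly extremal (then take $\hat b = b^0$ and there is nothing to prove), and note that the identical argument with $\max$ replacing $\min$ and $\overline f$ replacing $\underline f$ handles (\ref{worst-b}), consistent with the paper's stated convention that all results transfer.
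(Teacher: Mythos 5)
Your proposal is correct, and it reaches the conclusion by a genuinely different mechanism than the paper, even though both arguments start from the same place: the reduction, via Remark~\ref{rem:glob}, to the linear program (\ref{eq:bs}) over $P_{B^*}=\{b\in\mathbf{b}:\ \tilde{A}^{-1}_{B^*}b\ge 0\}$ (and both of you implicitly use the converse direction of that remark, namely that \emph{any} optimal solution of (\ref{eq:bs}) is an optimal scenario of (\ref{best-b}); this is justified because the reduced-cost condition (\ref{eq:opt}) for $B^*$ does not involve $b$, so the basic solution $\tilde{x}_{B^*}=\tilde{A}^{-1}_{B^*}\hat{b}$, $\tilde{x}_N=0$ is optimal for LP$(\hat{b})$ and attains $\underline{f}$). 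From there the paper argues through extreme points: the optimum of (\ref{eq:bs}) is attained at a vertex of $P_{B^*}$, a vertex requires $m$ linearly independent binding constraints, and since $\tilde{A}^{-1}_{B^*}$ is nonsingular and $(0,\dots,0)^T\notin\mathbf{b}$, the $m$ cone constraints cannot all bind, forcing at least one box constraint $b_i=\underline{b}_i$ or $b_i=\overline{b}_i$ to bind. You instead take an arbitrary optimal $b^0$ and slide it along the ray $\{tb^0\}$: positive homogeneity of $\tilde{A}^{-1}_{B^*}b\ge 0$ preserves feasibility, the hypothesis $0\notin\mathbf{b}$ supplies a coordinate $k$ with $0\notin[\underline{b}_k,\overline{b}_k]$ that makes the admissible $t$-interval a compact subset of $(0,\infty)$ containing $1$, and linearity of the objective in $t$ pushes the optimum to an endpoint, where some box constraint must bind. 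Both are sound; the paper's version is shorter given standard vertex-optimality theory and sets up the same counting structure reused in the later non-degeneracy result on strongly extremal scenarios, while yours is more elementary (no basic-solution/rank counting), pinpoints exactly where $0\notin\mathbf{b}$ enters (boundedness away from $0$ and $\infty$ of the scaling interval), and is mildly constructive in that it moves a given optimal scenario along its own ray to a weakly extremal optimal one.
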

\begin{proof}
Let the basis $B^*$ be a global optimal basis of (\ref{best-b}). By Remark \ref{rem:glob}, the optimal scenario $b^*$ is an optimal solution of the following linear program  
\begin{equation} \label{eq:bs}
\min\; \tilde{r}^T_{B^*}\tilde{A}^{-1}_{B^*}b\;\;\text{subject to}\;\; \tilde{A}^{-1}_{B^*}b \ge 0,\; b\in \mathbf{b}.
\end{equation}
Let us refer to the feasible set of (\ref{eq:bs}) as $P_{B^*}$. It is known that there exists an optimal solution of (\ref{eq:bs}) which is an extreme point of $P_{B^*}$.  

We know that each vertex of $P_{B^*}$ is a vector satisfying all the constraints such that at least $m$ of the constraints are binding and are linearly independent. We also know that matrix $\tilde{A}^{-1}_{B^*}$ is a full rank square matrix of order $m$. Note that since $(0,\dots,0)^T \notin \mathbf{b}$, then $\tilde{A}^{-1}_{B^*}b \neq 0$ for each $b \in \mathbf{b}$. Therefore, any extreme point of $P_{B^*}$ corresponds to a vector for which at least one of the constraints in $b\in \mathbf{b}$ is binding. We can then conclude that there exists $i \in \{1,\dots,m\}$ such that $b^*_i=\underline{b}_i$ or $b^*_i=\overline{b}_i$. This completes the proof.
\end{proof}

\noindent Note that $b^*$ can still be a weakly extremal scenario even in the case of $(0,\dots,0)^T\in \mathbf{b}$, but this requires the vector $(0,\dots,0)^T$ not to be a middle scenario.
\begin{corollary}
Suppose  that vector $(0,\dots,0)^T$ is such that it is  a weakly extremal scenario of of the interval vector $\mathbf{b}$. Then there exists a weakly extremal scenario $\hat{b}$ such that $b^*=\hat{b}$.
\end{corollary}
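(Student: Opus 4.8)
The plan is to mimic the proof of Proposition~\ref{prop:weak}, tracking the single place where the hypothesis $(0,\dots,0)^T \notin \mathbf{b}$ was used there and replacing it by a short case distinction. First I would let $B^*$ be a global optimal basis of (\ref{best-b}) and invoke Remark~\ref{rem:glob} to realize the optimal scenario as an optimal solution of the linear program (\ref{eq:bs}) over the polyhedron $P_{B^*} := \{b : \tilde{A}^{-1}_{B^*}b \ge 0,\ b\in \mathbf{b}\}$. Since (\ref{eq:bs}) attains its finite optimum $\underline{f}$, it admits an optimal solution $\hat{b}$ that is an extreme point of $P_{B^*}$; at $\hat{b}$ at least $m$ of the defining inequalities are binding and linearly independent, each of them coming either from the box constraints $b\in\mathbf{b}$ or from a row of $\tilde{A}^{-1}_{B^*}b \ge 0$.

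Then I would split into two cases. If at least one of the box inequalities $b_i \ge \underline{b}_i$ or $b_i \le \overline{b}_i$ is active at $\hat{b}$, then $\hat{b}$ is weakly extremal by Definition~\ref{def:pos}(ii) and we are done. Otherwise all $m$ active, linearly independent constraints lie among the rows of $\tilde{A}^{-1}_{B^*}b \ge 0$; since $\tilde{A}^{-1}_{B^*}$ is a nonsingular $m\times m$ matrix, those $m$ active rows form a nonsingular square homogeneous system, forcing $\tilde{A}^{-1}_{B^*}\hat{b}=0$ and hence $\hat{b}=(0,\dots,0)^T$. By hypothesis $(0,\dots,0)^T$ is a weakly extremal scenario of $\mathbf{b}$, so again $\hat{b}$ is weakly extremal. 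In either case $\hat{b}$ is an optimal scenario of (\ref{best-b}) that is weakly extremal, so setting $b^*=\hat{b}$ proves the claim.

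The argument is essentially routine once the structure of the proof of Proposition~\ref{prop:weak} is in hand, so I do not expect a genuine obstacle. The only point that needs care is the degenerate situation where the extreme point of $P_{B^*}$ sits at the origin: feasibility of $\hat{b}=(0,\dots,0)^T$ requires $(0,\dots,0)^T\in\mathbf{b}$, which is compatible with the hypothesis, and the conclusion then follows precisely because the hypothesis places $(0,\dots,0)^T$ on the boundary of the box $\mathbf{b}$ rather than in its interior.
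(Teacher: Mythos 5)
Your proof is correct and follows exactly the route the paper intends: the corollary is left unproved there as an immediate consequence of Proposition~3, whose argument you reproduce, isolating the single use of the hypothesis $(0,\dots,0)^T\notin\mathbf{b}$ and replacing it with the case distinction in which the extreme point of $P_{B^*}$ either activates a box constraint or equals the origin, which is weakly extremal by assumption. No gaps; this matches the paper's reasoning at the same level of rigor.
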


\noindent Proposition \ref{prop:weak} also reveals  an interesting observation related to middle scenarios.
\begin{corollary}\label{cor:mid}
If the optimal scenario $b^*$ is unique and it is a middle scenario, then we have $b^*=(0,\dots,0)^T$.
\end{corollary}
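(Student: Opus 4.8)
The plan is to reuse the linear-programming reformulation of Remark~\ref{rem:glob} together with the vertex argument from the proof of Proposition~\ref{prop:weak}, now sharpened by the uniqueness hypothesis. First I would fix a global optimal basis $B^*$ of (\ref{best-b}). By Remark~\ref{rem:glob}, the optimal scenario $b^*$ is then an optimal solution of the linear program
\begin{equation*}
\min\; \tilde{r}^T_{B^*}\tilde{A}^{-1}_{B^*}b\;\;\text{subject to}\;\; \tilde{A}^{-1}_{B^*}b \ge 0,\; b\in \mathbf{b},
\end{equation*}
in the variable $b$, whose feasible set I again denote by $P_{B^*}$.

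Next I would argue that, since $b^*$ is assumed to be the \emph{unique} optimal scenario, it is in fact the unique optimal solution of this linear program. Indeed, if $b'\in P_{B^*}$ were another optimal solution, then $\tilde{A}^{-1}_{B^*}b' \ge 0$ expresses feasibility of the basis $B^*$ for LP$(b')$, while the reduced-cost condition (\ref{eq:opt}) is independent of the right-hand side and therefore still holds; hence $B^*$ is an optimal basis of LP$(b')$, its associated optimal solution has $f$-value $\tilde{r}^T_{B^*}\tilde{A}^{-1}_{B^*}b' = \underline{f}$, so $b'$ is an optimal scenario, and by uniqueness $b' = b^*$. A linear program with a unique optimal solution attains it at an extreme point of its feasible polyhedron, so $b^*$ is a vertex of $P_{B^*}$.

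I would then invoke the standard characterization of a vertex in $\mathbb{R}^m$: at $b^*$ there are $m$ linearly independent binding constraints among those defining $P_{B^*}$, i.e.\ among the $m$ rows of $\tilde{A}^{-1}_{B^*}b \ge 0$ and the $2m$ box constraints $\underline{b}\le b\le\overline{b}$. Since $b^*$ is a middle scenario, $\underline{b}_i < b^*_i < \overline{b}_i$ for every $i$, so none of the $2m$ box constraints is active at $b^*$. Consequently all $m$ binding constraints come from $\tilde{A}^{-1}_{B^*}b \ge 0$; as $\tilde{A}^{-1}_{B^*}$ is a nonsingular $m\times m$ matrix its rows are already linearly independent, so all of them must be active, i.e.\ $\tilde{A}^{-1}_{B^*}b^* = 0$. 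Multiplying by $\tilde{A}_{B^*}$ gives $b^* = (0,\dots,0)^T$.

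The only delicate step I anticipate is the second paragraph: turning the statement of Proposition~\ref{prop:weak} (``there \emph{exists} a weakly extremal optimal scenario'') into the stronger assertion that $b^*$ \emph{itself} is a vertex of $P_{B^*}$, which is exactly where the uniqueness assumption enters. Once that is in place, the counting-of-active-constraints argument is identical to the one already used, specialized by the observation that a middle scenario activates none of the box inequalities.
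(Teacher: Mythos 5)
Your proof is correct and follows essentially the same route as the paper's: the auxiliary linear program of Remark~\ref{rem:glob}, the vertex characterization, and the count of binding constraints forcing $\tilde{A}^{-1}_{B^*}b^*=0$ and hence $b^*=(0,\dots,0)^T$. In fact your second paragraph is more careful than the paper's argument, which simply asserts that $b^*$ is the unique optimal solution of that auxiliary LP, whereas you justify this step by showing that any alternative optimum of the LP would itself be an optimal scenario and must therefore coincide with $b^*$ by the uniqueness hypothesis.
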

\begin{proof}
Similar to the proof of Proposition \ref{prop:weak}, let the basis $B^*$ be a global optimal basis of (\ref{best-b}). Consequently, the optimal scenario $b^*$ is an optimal solution of (\ref{eq:bs}). Suppose for the sake of contradiction that $b^*$ is unique and it is a middle scenario, i.e., $\underline{b}_i<b^*_i<\overline{b}_i$ for all $i\in\{1,\dots,m\}$, but $b^*\ne (0,\dots,0)^T$. Since $b^*$ is the unique optimal solution of (\ref{eq:bs}),  then $m$ linearly independent constraints needs to be binding on $b^*$ to form an extreme point. $\tilde{A}_{B^*}^{-1}$ is a full rank square matrix of order $m$; thus, we need to have $\tilde{A}_{B^*}^{ -1}b=0$. This system possesses one unique solution which is $(0,\dots,0)^T$. Therefore, $b^*=(0,\dots,0)^T$. We derive a contradiction here, and this completes the proof.
\end{proof} 
Note that the opposite of Corollary \ref{cor:mid} is not valid. That is, if the optimal scenario is $b^*=(0,\dots,0)^T$ , then this does not necessarily imply neither that $b^*$ is unique, nor that it is a middle scenario. The following provides a counterexample.
\begin{exmp}\label{example:counter2}
Consider the following interval linear program
$$\min\; 5x_1+6x_2\;\;\text{subject to}\;\; 4x_1+5x_2\le [0,5],\;x_1,x_2\ge 0,$$
\begin{figure}
\centering
    \includegraphics[scale=1]{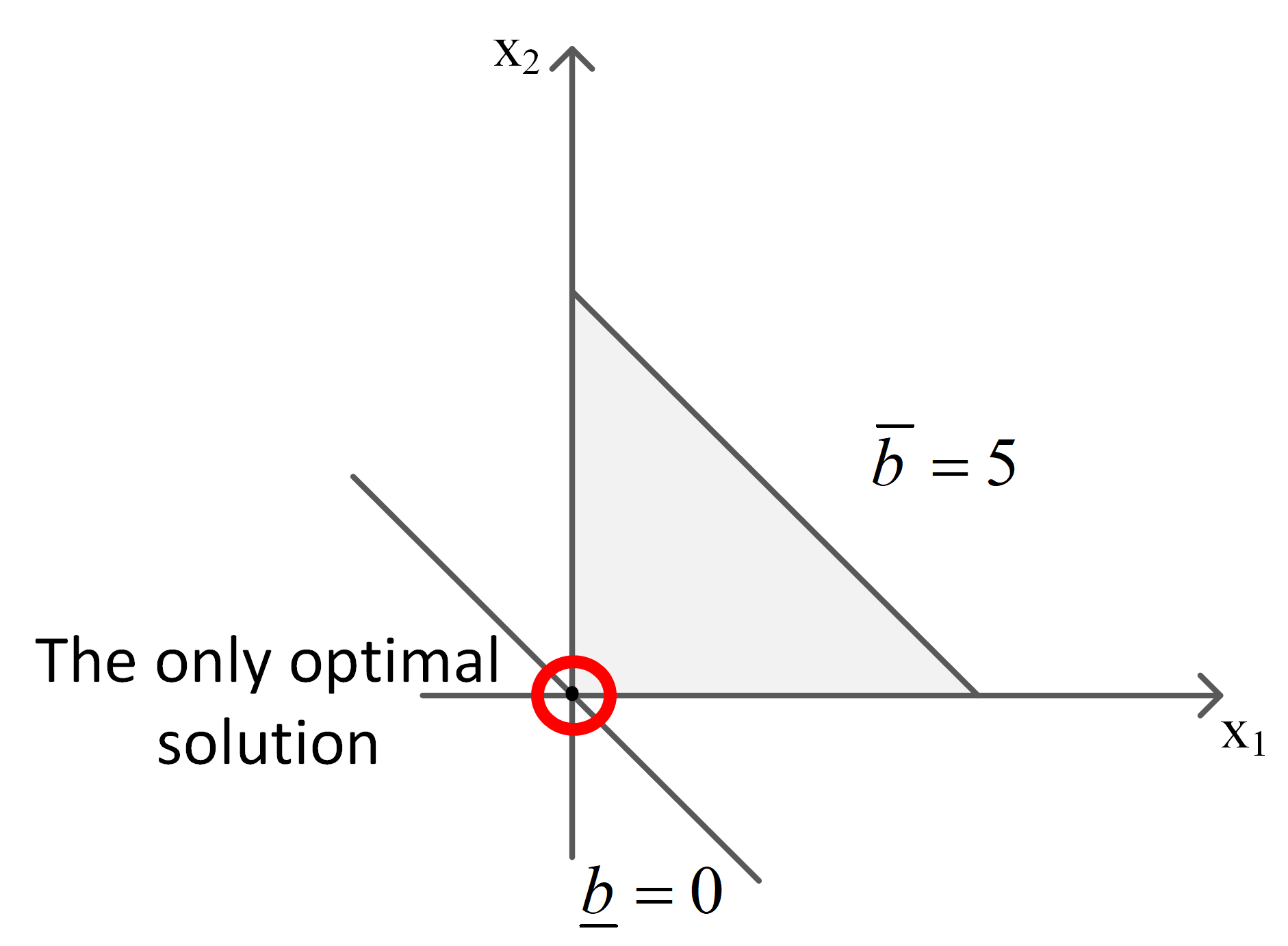}
\caption{(Example \ref{example:counter2}) Union of all feasible sets in light gray; the only optimal solution is red circled.}
\label{fig:counter2}
\end{figure}
and let $f(x)=10x_1+3x_2$ be an outcome function. From Figure \ref{fig:counter2}, we observe that  $x^*=(0,0)$  is the unique optimal solution for all the linear programs LP$(b)$, for all $b \in \mathbf{b}$, that is, $\Omega_b=\{(0,0)\}$. Hence, we have $\underline{f}=0$. It is not hard to see that any scenario in the interval $[0,5]$ is an optimal scenario for ORP$_b$. Therefore, $b^*=0$ is an optimal scenario, but it is neither unique   nor a middle scenario.
\end{exmp}
We now present a condition under which $b^*$ is a strongly extremal scenario.
\begin{proposition}
If $B^*$ is non-degenerate for LP$(b^*)$, then $b^*$ is a strongly extremal scenario.
\end{proposition}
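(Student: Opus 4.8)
The plan is to argue by contradiction, exploiting the characterization of $b^*$ as an extreme point of the polyhedron $P_{B^*}$ from Remark~\ref{rem:glob}, together with the non-degeneracy hypothesis. Recall that, given the global optimal basis $B^*$, the optimal scenario $b^*$ solves the linear program
\begin{equation*}
\min\; \tilde{r}^T_{B^*}\tilde{A}^{-1}_{B^*}b\;\;\text{subject to}\;\; \tilde{A}^{-1}_{B^*}b \ge 0,\; b\in \mathbf{b},
\end{equation*}
and we may take $b^*$ to be an extreme point of its feasible set $P_{B^*}$, which lives in $\mathbb{R}^m$. An extreme point of $P_{B^*}$ is a point at which $m$ linearly independent constraints are active. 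The constraints available are the $m$ rows of $\tilde{A}^{-1}_{B^*}b \ge 0$ (equivalently $(\tilde{A}^{-1}_{B^*}b)_k = 0$ when active) and the $2m$ box constraints $\underline{b}_i \le b_i \le \overline{b}_i$ (at most one active per coordinate $i$).

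First I would translate the non-degeneracy hypothesis. Since $B^*$ is an optimal basis of $\mathrm{LP}(b^*)$, the basic solution is $\tilde{x}_{B^*} = \tilde{A}^{-1}_{B^*}b^*$, $\tilde{x}_N = 0$; saying $B^*$ is non-degenerate for $\mathrm{LP}(b^*)$ means $\tilde{A}^{-1}_{B^*}b^* > 0$ strictly, i.e.\ \emph{none} of the $m$ constraints $(\tilde{A}^{-1}_{B^*}b)_k \ge 0$ is active at $b^*$. Consequently, all $m$ linearly independent constraints that must be active at the extreme point $b^*$ of $P_{B^*}$ have to come from the box constraints $\underline{b}_i \le b_i \le \overline{b}_i$. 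Since at most one box constraint is active per coordinate $i \in \{1,\dots,m\}$, and these single-coordinate constraints are automatically linearly independent when they involve distinct coordinates, having $m$ of them active forces exactly one box constraint active in \emph{every} coordinate: for each $i$ either $b^*_i = \underline{b}_i$ or $b^*_i = \overline{b}_i$. By Definition~\ref{def:pos}(iii), this is precisely the statement that $b^*$ is a strongly extremal scenario.

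The main obstacle is the bookkeeping around the phrase ``at least $m$ constraints active'': one must be careful that the $m$ active constraints needed for an extreme point are linearly independent, and check that the box constraints active at $b^*$ are indeed $m$ distinct, linearly independent ones (they are, being of the form $e_i^T b = \text{const}$ for distinct unit vectors $e_i$). One should also note that if fewer than $m$ box constraints were active, then some active $(\tilde{A}^{-1}_{B^*}b)_k \ge 0$ constraint would be needed to reach $m$ independent active constraints, contradicting non-degeneracy; and the case where no global optimal basis exists is excluded since we are assuming $\underline f$ is attained (per the Remark following the definition of ORP$_b$), so a global optimal basis and hence an optimal scenario realized as an extreme point of $P_{B^*}$ does exist. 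This completes the argument.
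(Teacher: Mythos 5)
Your argument is correct and is essentially the paper's own proof: both use the linear program over $b$ from Remark~\ref{rem:glob}, note that non-degeneracy means $\tilde{A}^{-1}_{B^*}b^* > 0$ so none of those constraints can be active, and conclude that the $m$ linearly independent active constraints at an extreme point must all be box constraints, i.e.\ $b^*_i \in \{\underline{b}_i, \overline{b}_i\}$ for every $i$. Your extra bookkeeping (linear independence of the box constraints, attainment of the optimum) just makes explicit what the paper leaves implicit.
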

\begin{proof}
Let us recall that, given a global optimal basis $B^*$, the following linear program returns $\underline{f}$ and $b^*$.
\begin{equation*} 
\min\; \tilde{r}^T_{B^*}\tilde{A}^{-1}_{B^*}b\;\;\text{subject to}\;\; \tilde{A}^{-1}_{B^*}b \ge 0,\; b\in \mathbf{b}
\end{equation*}
We know that $B^*$ is a non-degenerate  optimal basis of LP$(b^*)$, and thus $b^*$ is such that $\tilde{A}^{-1}_{B^*}b^*> 0$. Therefore, to have an extreme point, $m$ linearly independent  constraints in $b\in \mathbf{b}$ need to binding on $b^*$, that is, $ b^*_i=\underline{b}_i$ or $b^*_i=\overline{b}_i$ for all $i\in\{1,\dots,m\}$. The proof is now concluded.
\end{proof} 
\noindent Finally, the following observation states another case under which an optimal scenario $b^*$ is strongly extremal. It follows directly from  Proposition \ref{propo:poly2} in Section \ref{properties}.

\begin{observation}
Assume that $\underline{z}$ is a finite value. If $r$ is such that $r\in \mathcal{C}$, then we have $b^*=\overline{b}$, which is a strongly extremal scenario.
\end{observation}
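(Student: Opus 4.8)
The plan is to extract the conclusion directly from the proof of Proposition \ref{propo:poly2}, keeping track of \emph{which} feasible point realizes $\underline{f}$ and specializing the argument to the lower value only, for which the single hypothesis that $\underline{z}$ is finite already suffices.

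First I would recall from (\ref{eq:zlow2}) that $\underline{z}$ equals the optimal value of the linear program P$_1$: $\min\{c^Tx : Ax\le\overline{b},\ x\ge 0\}$, i.e. of LP$(\overline{b})$ with objective $c$. Since $\underline{z}$ is finite and $\Omega_b$ is (by the standing assumption) nonempty and bounded, P$_1$ admits an optimal solution $x^*$, and $x^*\in s(\overline{b})\subseteq\Omega_b$ because $\overline{b}\in\mathbf{b}$. Next I would invoke the key step already carried out inside the proof of Proposition \ref{propo:poly2}: since $r\in\mathcal{C}$, this $x^*$ is not merely $c$-optimal but also $r$-optimal over $\{x : Ax\le\overline{b},\ x\ge 0\}$, and that proof shows $r^Tx^*=\underline{f}$, i.e. $x^*$ minimizes $f$ over the whole optimal set $\Omega_b$. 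Hence $f(x^*)=\underline{f}$ with $x^*\in s(\overline{b})$, so by the definition of an optimal scenario of (\ref{best-b}) we may take $b^*=\overline{b}$; and $\overline{b}$ has $b_i=\overline{b}_i$ for every $i\in\{1,\dots,m\}$, so it is a strongly extremal scenario in the sense of Definition \ref{def:pos}(iii).

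The derivation is essentially bookkeeping, so I do not anticipate a genuine obstacle; the two points that merit a line of care are that only the finiteness of $\underline{z}$ is needed here (finiteness of $\overline{z}$ enters Proposition \ref{propo:poly2} solely for the symmetric claims concerning $\overline{f}$ and the optimal scenario of (\ref{worst-b})), and that the standing nonemptiness and boundedness assumption on $\Omega_b$ must be used to guarantee that an optimal solution $x^*$ of P$_1$ exists, so that $\overline{b}$ is genuinely admissible as $b^*$.
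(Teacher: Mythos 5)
Your proposal is correct and follows essentially the same route as the paper: the paper states that this observation follows directly from Proposition \ref{propo:poly2}, and your argument is precisely the lower-bound half of that proposition's proof with the realizing point $x^*\in s(\overline{b})$ tracked explicitly, yielding $\underline{f}=f(x^*)$ and hence $b^*=\overline{b}$, a strongly extremal scenario. Your two side remarks (only finiteness of $\underline{z}$ is needed, and attainment of the optimum of P$_1$) are consistent with the paper's treatment, noting that finiteness of $\underline{z}$ alone already guarantees that P$_1$ attains its optimum.
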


\section{Solution Methods} \label{techniques}
In Section \ref{properties}, we show that ORP$_b$ is an NP-hard problem in general; however, when the underlying ILP$_b$ is unique B-stable, we can solve  ORP$_b$ to optimality in polynomial time. B-stability is unlikely to occur when we are dealing with wide intervals and large problems. Therefore, unless P$=$NP, there is no hope for any polynomial-time solvable characterization of the problem in general. As such,  we here describe two different approaches to approximate the optimal solution of   ORP$_b$. Specifically, we present a super-set based method and a local search algorithm to approximate (\ref{best-b}) and (\ref{worst-b}).

\subsection{Super-set based Method}
\noindent As stated in the previous sections, an explicit description of the optimal set $\Omega_b$ is not always available. However, if we are able to find a super-set $E(\Omega_b)$  containing it, i.e., such that $\Omega \subseteq E(\Omega_b)$,  we could then approximate the  optimal values $\underline{f}$ and $\overline{f}$ by solving the two following optimization problems
\begin{align}
\underline{f}^{L}& =\{\min\; r^Tx\;\ \text{subject to}\;\; x\in E(\Omega_b)\}, \label{best-encl}\\
\overline{f}^{U}&= \{\max\; r^Tx\;\ \text{subject to}\;\; x\in E(\Omega_b)\},  \label{worst-encl}
\end{align}
 where  $\underline{f}^{L}$ and $\overline{f}^{U}$ denote a lower bound of $\underline{f}$ and an upper bound of $\overline{f}$, respectively.

To define a super-set $E(\Omega_b)$, we can apply some duality properties in linear programming. More specifically, let us recall the dual of  ILP$_b$ for a particular $b\in\mathbf{b}$ (i.e., LP $(b)$),
\begin{equation*}
 \max \; b^Ty \;\;\text{subject to}\;\;A^Ty \leq c,\;y\leq0,
\end{equation*}
where $y\in\mathbb{R}^m$ is the vector of decision variables. By the strong duality condition in linear programming, we can describe the optimal solution set of LP$(b)$ by means of
  the following linear system
\begin{equation*} 
Ax \leq b, \;\;x\geq 0, \;\;\; A^Ty \leq c, \;\;y\leq0,\;\; c^Tx = b^Ty.
\end{equation*}
Let us assume, without loss of generality, that $b$ is a vector of decision variables varying within the interval vector $\mathbf{b}$. We can then characterize  the optimal set $\Omega_b$ as
\begin{equation} \label{eq:nonlin}
Ax \leq b, \;\;x\geq 0, \;\;\; A^Ty \leq c, \;\;y\leq0,\;\; c^Tx = b^Ty,\;\;b\in\mathbf{b},
\end{equation}
in variables $x,y,b$. This leads to a nonlinear programming problem, due to the nonlinear term $ b^Ty$, which is very difficult to solve. Therefore, we linearize it by using McCormick envelope techniques \cite{McCormick1976}. Let $[\underline{y},\overline{y}]$ be an interval enclosure for $y$. We then apply overestimator and underestimator constraints  to linearize the nonlinear constraint $c^Tx = b^Ty$. The resulting system reads
\begin{subequations}  \label{eq:mc}
\begin{align}
Ax \leq b, \;\;x\geq 0, \;\;\; A^Ty \leq c, \;\;y\leq0,\;\;b\in\mathbf{b}, \\
c^Tx\le \underline{y}^Tb+\overline{b}^Ty-\overline{b}^T\underline{y}, \label{over1}\\
c^Tx \le\overline{y}^Tb+\underline{b}^Ty-\underline{b}^T\overline{y}, \label{over2}\\
c^Tx \ge \overline{y}^Tb+\overline{b}^Ty-\overline{b}^T\overline{y}, \label{under1}\\
c^Tx \ge \underline{y}^Tb+\underline{b}^Ty-\underline{b}^T\underline{y}, \label{under2}
\end{align}
\end{subequations}
where (\ref{over1})-(\ref{over2}) are called overestimators, while (\ref{under1})-(\ref{under2}) are called underestimators.

 System (\ref{eq:mc}) is a super-set containing $\Omega_b$. Therefore, we can use it to solve problems (\ref{best-encl}) and (\ref{worst-encl}). To compute an interval enclosure $[\underline{y},\overline{y}]$ for $y$, we can apply the contractor algorithm in \cite{contractor}. Briefly, the contractor algorithm is an iterative refinement algorithm. It starts with an enclosure of an optimal set and contracts such an enclosure at each iteration until improvement is insignificant. It runs in polynomial time, and it returns a sufficiently tight interval enclosure for $y$. We use this algorithm in our experiment in Section \ref{results} to get the interval enclosure $[\underline{y},\overline{y}]$.

\subsection{Local Search Algorithm}
In this section, we describe a local search algorithm to approximate $\underline{f}$ and $\overline{f}$. Local search is a heuristic method which, given a current feasible solution, tries to improve it by exploring feasible solutions in its neighborhood \cite{siarry2016metaheuristics}. Since  the returned solution will be a member of $\Omega_b$, the local search algorithm gives a lower bound for $\overline{f}$ (denoted as $\overline{f}^L$)  and an upper bound for $\underline{f}$ (denoted as $\underline{f}^U$). Our algorithm starts with an initial solution associated with a given scenario $b \in \mathbf{b}$, then it explores two neighborhoods of the solution, obtained by perturbing $b$, to find a new solution. If the new solution is better than the current one, then it stores the solution and starts a new iteration. The algorithm proceeds in this way until a stopping condition is met. 
\noindent We discuss our neighborhood structure and details of our algorithm next.
\subsubsection{Neighborhood Structure}
\noindent We define our neighborhood structure in the scenario space, that is, given an optimal solution of a linear program associated with a particular scenario $b \in \mathbf{b}$, we define two neighborhood structures, namely plus and minus neighborhoods, obtained by perturbing $b$. Specifically, a plus neighbor (minus neighbor) of a scenario $b \in \mathbf{b}$ is obtained by increasing (decreasing) some components of $b$ by a given quantity. The number of components of $b$ to be perturbed and the amount of perturbation (increment or decrement) are adjustable values. We formally define our neighborhood structures as
\begin{align} 
N^{+}_{k,h}(b)&:=\{ \tilde{b}\in \mathbf{b}: \tilde{b}_i=b_i+k\phi^+_i,\; \tilde{b}_j=b_j,\; i\in P,\; j \neq i, \; P\in P(h) \},\label{plus}\\
N^{-}_{k,h}(b)&:=\{ \tilde{b}\in \mathbf{b}: \tilde{b}_i=b_i-k\phi^-_i,\; \tilde{b}_j=b_j,\; i\in P,\; j \neq i,\; P \in P(h)\}, \label{minus}
\end{align}
where $\phi_i^{+}$ and $\phi_i^{-}$ represent the maximum allowable perturbation of $b_i$, and they are computed, respectively, as $\phi^+_i=\overline{b}_i-b_i$ and $\phi^-_i=b_i-\underline{b}_i$. Both the plus and the minus neighborhoods of a given scenario are defined depending on two parameters: parameter $k \in (0,1]$ which is a fraction of $\phi_i^{\pm}$ by which we perturb $b_i$, and parameter $h \in (0,1]$ which is a fraction of the total number of components in vector $b$ which we  perturb simultaneously. Let us consider the set $\{1,\ldots,m\}$ as the index set of components in vector $b$. With each value of the parameter $h$, we denote by $P(h)$ the collection of all the possible subsets of $\{1,\ldots,m\}$ of cardinality $\left \lfloor{h\times m}\right \rfloor $ \footnote{$\lfloor. \rfloor$ denotes the floor function.}, that is, $P(h):=\{P \subseteq \{1,\dots,m\}: |P|=\left \lfloor{h\times m}\right \rfloor\}$. Basically, each subset P in $P(h)$ represents a choice of $\left \lfloor{h\times m}\right \rfloor $ components  of a current scenario $b \in \mathbf{b}$, which are simultaneously perturbed. Given a scenario $b \in \mathbf{b}$ and a value of $h$, the number of neighbors in either $N^{+}_{k,h}(b)$ or $N^{-}_{k,h}(b)$ is equal to ${m}\choose{\lfloor{h\times m} \rfloor}$. Finally, for a particular $b\in \mathbf{b}$, a fixed value of $k$,  a fixed value of $h$, and  a set $P \in P(h)$, we determine a neighbor $\tilde{b}$ in either $N^{+}_{k,h}(b)$ or $N^{-}_{k,h}(b)$ ,  and denote by $f^{+}_{b,k,h,P}$ ($f^+$ for short when no confusion arises) or  $f^{-}_{b,k,h,P}$ ($f^-$ for short when no confusion arises) the value of the outcome function computed on an optimal solution of the linear program associated with $\tilde{b}$.
\begin{algorithm}[]
\small             
\caption{Local search algorithm to compute $\underline{f}^U$}     
\label{alg1}     
\SetAlgoLined
\small
\KwSty{Input:} { $A$, $\mathbf{b}$, $c$, $r$, $Q$, $V$, max{\text -}shakes, threshold}

\KwResult{the best approximation among all approximations}

Compute $\underline{f}^U_{int}$.

Set $\underline{f}^U \leftarrow \underline{f}^U_{int}$ and $b \leftarrow \hat{b}_{int}$.

Put $q \leftarrow 1$ and $v \leftarrow 1$.

Set $k \leftarrow Q^{(q)}$ and $h\leftarrow V^{(v)}$.

Randomly select a set $P$ in $P(h)$.

Put $u\leftarrow 0$ and $o \leftarrow 1$.

\While{{$u \leq max{\text -}shakes$}}{
Compute $f^{+}$ and $f^{-}$.

Set $\hat{f}\leftarrow \min \{f^{+},f^{-}\}$ and let $\hat{b}$ be the corresponding right-hand side.

Determine $improvement \leftarrow \underline{f}^U-\hat{f}$.

\uIf {$improvement\geq threshold$} 
{Set $\underline{f}^U \leftarrow \hat{f}$ and $b \leftarrow \hat{b}$.

Set $k \leftarrow Q^{(1)}$.}

\uElseIf {$q< |Q|$}{
Set $q\leftarrow q+1$.

Put $k\leftarrow Q^{(q)}$.}

\uElseIf {$v< |V|$}{
\uIf {$o\le \lfloor\frac{1}{h} \rfloor$}{

Set $o\leftarrow o+1$.

Let $\Gamma$ be the set of all indices chosen so far for the current $h$.

Randomly generate a set $P$ in $P(h)$ such that $P\cap \Gamma=\emptyset$.

Set $k \leftarrow Q^{(1)}$.}

\Else{Update $v\leftarrow v+1$ and $o\leftarrow 1.$

Put $h\leftarrow V^{(v)}$.

 Randomly generate a set $P$ in $P(h)$.

Set $k\leftarrow Q^{(1)}$.}
}

\Else{ Update $u\leftarrow u+1$.

 Set $k$, $h$ to their initial values and set  $o$,$q$, and $v$ to 1.

Randomly generate a scenario $b\in \mathbf{b}$.

Randomly generate a set $P$ in $P(h)$.}
}

\end{algorithm}
\subsubsection{The Algorithm}
The pseudo-code Algorithm \ref{alg1}  shows details of our algorithm to compute $\underline{f}^U$; we can apply a similar scheme to compute $\overline{f}^L$. Line 1 contains input of the algorithm: $A$, $\mathbf{b}$, $c$  are parameters of the ILP$_b$, $r$ is the coefficient vector of an outcome function, $Q$  is an ordered set of all the selected values $k$, $V$  is the ordered set of all the selected values  $h$, max-shakes  indicates the stopping condition, and threshold  represents the minimum acceptable improvement during execution of the algorithm. We denote by $Q^{(q)}$ and $V^{(v)}$ the $q$-th and $v$-th elements in the two ordered sets, respectively.

Line 2 computes an initial solution $\underline{f}_{int}^U$, and stores the associated scenario $\hat{b}_{int}$. An initial solution can be computed by solving ORP$_b$ for a randomly generated scenario. The algorithm repeatedly refines an initial solution by using the neighborhood structures defined earlier. 
Lines 4-5 set the initial values of parameters $k$ and $h$. Line 6 generates a set $P$ in the collection $P(h)$. 
Line 7 initiates counter variables. Line 8 checks whether the stopping condition is met. At each iteration, using the neighborhood structures (\ref{plus}) and (\ref{minus}), lines 9-11 determine a potential incumbent solution, and compute the improvement. If the improvement is acceptable (line 12), lines 13-14 update  $\underline{f}^U$ and $b$ , and reset $k$ to its initial value. Otherwise, the algorithm tries the next value of $k$ in $Q$ (lines 15-17). After trying all $k \in Q $, the algorithm chooses a different set $P$ for the current value of $h$, or it tries different values of $h$ (lines 18-29). Specifically, it first randomly selects a new set $P$  in $P(h)$. Note that lines 19-23 generate different sets $P$ so that they are mutually exclusive. After trying a maximum number $\lfloor\frac{1}{h} \rfloor$ of different sets $P$ in $P(h)$  for a given $h$, if still no improvement is achieved, then lines 24-29 choose a new value of $h$ in $V$. The algorithm continues  in this way until all $h$ in $V$ have been selected. Lines 30-35 apply a shaking step. The aim of this step is to move the search to a different area of the search space. After trying all values of $h$ and $k$ without getting  any improvement, a shaking phase starts. In this phase, the input parameters $k$ and $h$  are set to their initial values, counters are re-initialized, a random scenario in $\mathbf{b}$ is generated, and  a set $P$ in $P(h)$ is randomly generated. The algorithm proceeds in this way until the stopping condition is met. Finally, it returns the best approximation among all.

\section {Experimentation} \label{results}
Here, we present our computational experiments and related results to evaluate the performance of our approaches. Since there exists no algorithm in the literature to compare our approaches with, then, in addition to our super-set based method and our local search (LS) algorithm, we also use FMINCON, a nonlinear programming solver in MATLAB, to solve the nonlinear formulation of the ORP$_b$, that is, minimizing (maximizing) $f(x)$ subject to system (\ref{eq:nonlin}). We compare all the methods on two sets of randomly generated instances. The first set, referred to as class 1, is a collection of unique B-stable instances so that the output of our approaches can be compared to the optimal values of the problem (see Proposition \ref{sovable1}). The second set of instances, referred to as class 2, is a series of general instances for which the unique B-stability property is not guarantied. Thus, for this set of instances, the optimal values are not known. 
\subsection {Description of Instances}
We generated class 1 instances using the following procedure. First, for a given problem size ($m,n$) and uncertainty parameter (i.e., interval width) ($\delta$), entries of matrix $A\in \mathbb{Z}^{m\times n}$ were randomly generated in $[-10,10]$ using uniformly distributed pseudorandom integers. Similarly, vectors $c\in \mathbb{Z}^n,\underline{b}\in \mathbb{Z}^m,r\in \mathbb{Z}^n$ were randomly taken in $[-20,-1],[10,20]$, and $[-20,20]$, respectively. Vector $\overline{b}$  was constructed as $\overline{b}=\underline{b}+\delta e$, where $e=(1,...,1)^T$ is a vector of ones with the convenient dimension. To ensure boundedness of the optimal set, we kept entries of the last row of matrix $A$ positive. To have a unique B-stable instance, we found an optimal basis by solving the linear program associated with a randomly chosen scenario, and checked whether the optimal basis is unique and common to all scenarios, i.e., we checked the following conditions\footnote{Here, we adopt the unique B-stability conditions for our problem. See \cite{hladik2014determine} for a thorough investigation of B-stability in interval linear programming.}
\begin{align} 
\tilde{c}_N^T-\tilde{c}_B^T\tilde{A}_B^{-1}\tilde{A}_N> 0^T, \nonumber \\
\tilde{A}_B^{-1}b_c-|\tilde{A}_B^{-1}|b_{\Delta}\geq 0,\nonumber
\end{align}
where $b_c := \frac{1}{2}(\overline{b}+\underline{b})$ and $b_{\Delta}:=\frac{1}{2}(\overline{b}-\underline{b})$ denote the center and the radius of the interval $\mathbf{b}$. If both conditions held true, we saved the instance. Otherwise, we started over the process to generate a new instance. In our experimental study, for class 1 instances, we considered the following problem sizes and values for the uncertainty parameter:  $m=\{10, 30, 50, 80, 100\}$, $n=\{15, 45, 75, 120, 150\}$ and $\delta=\{0.1, 0.25, 0.5, 0.75, 1\}$. We studied 25 different combinations of $m,n,\delta$, and generated 30 instances for each combination, for a total of 750 instances.

We used a similar procedure to generate class 2 instances, except that the unique B-stability was not required for these instances. For class 2 instances, we considered the following problem sizes and values for the uncertainty parameter: $m=\{10, 30, 50, 80, 100,200,300,400,500\}$,\;$n=\{15, 45, 75, 120,150,300,400,\\500,600\}$ and $\delta=\{0.1, 0.25, 0.5, 0.75, 1\}$. We examined 45 different combinations of $m,n,\delta$, and again we generated 30 instances for each combination, for a total of 1,350 instances. 
\subsection{Implementation of Algorithms}
The input parameters for the local search algorithm were chosen as follows. The two ordered sets $Q$ and $V$ were such that $Q=\{0.1,0.25, 0.5, 0.75, 1\}$ and $V=\{0.05,0.1,0.15,0.2, 0.25, 0.3,0.5,1\}$. The max-shake parameter was set equal to one, and the threshold parameter was set equal to 0.001.  FMINCON has five stopping criteria namely maximum iterations, maximum function evaluations, step tolerance, function tolerance, and constraint tolerance. We set the maximum iterations and the maximum function evaluations to 300,000, step tolerance to ($1.000E-10$), and function  and constraints tolerances to ($1.000E-6$). For each problem, we first solved a linear program associated with a randomly generated scenario, and we then took an optimal solution of the linear program as the starting point for the FMINCON. We imposed a time limit of 30 minutes on the solver  for each instance such that if the solver cannot  normally converge to a solution within 30 minutes, it is terminated and its current solution is returned (if it lies within the feasibility tolerance). For the cases the solver reached one of its  internal stopping criteria before reaching the time limit, it started over from a different starting point and continued in this way until either it converged to a solution or it reached the time limit. For the cases for which  the solver did not normally converge to a solution within the time limit even after trying multiple starting points, we report the best feasible solution found among all  (if any).

 Lastly, the experiments were carried out on a workstation with an Intel(R)
Core (TM) i7-4790 CPU processor at 3.60 GHz with 32.00 GB of RAM. All the methods
were coded in MATLAB(R2019b), using IBM ILOG CPLEX 12.9 for solving
linear programs.
\begin{table}[]
\tabcolsep=0.09cm
\renewcommand\arraystretch{0.4}
\small
\centering
\caption{Results related to the computation of $\underline{f}$ on class 1 instances (average gap and average running time)}
\label{tab:flb}
\begin{tabular}{@{}ccccccccccc@{}}
\toprule
\multicolumn{3}{c}{input} &  & \multicolumn{3}{c}{average gap} &  & \multicolumn{3}{c}{average time (sec)} \\ \cmidrule(r){1-3} \cmidrule(lr){5-7} \cmidrule(l){9-11} 
$m$ & $n$ & $\delta$ &  &\makecell{ LS\\$\underline{f}^{U_1}$} &\makecell{ FMINCON \\ $\underline{f}^{U_2}$} &\makecell{ super-set\\  $\underline{f}^{L}$ } &  & \makecell{ LS\\$\underline{f}^{U_1}$} & \makecell{ FMINCON \\ $\underline{f}^{U_2}$}  & \makecell{ super-set\\  $\underline{f}^{L}$ } \\ \midrule
10 & 15 & 0.1 &  & 0.0001 & 0.0035 & 0.0122 &  & 0.3214 & 0.4921 & 0.0016 \\
10 & 15 & 0.25 &  & 0.0001 & 0.0016 & 0.1530 &  & 0.3212 & 0.2684 & 0.0007 \\
10 & 15 & 0.5 &  & 0.0001 & 0.0186 & 0.1618 &  & 0.3411 & 0.3133 & 0.0007 \\
10 & 15 & 0.75 &  & 0.0001 & 0.0052 & 0.1411 &  & 0.3385 & 0.3034 & 0.0007 \\
10 & 15 & 1 &  & 0.0003 & 0.0190 & 0.9195 &  & 0.3437 & 0.3516 & 0.0007 \\ \midrule
30 & 45 & 0.1 &  & 0.0004 & 0.0057 & 0.1594 &  & 0.7657 & 1.6774 & 0.0020 \\
30 & 45 & 0.25 &  & 0.0029 & 0.0063 & 1.0221 &  & 0.8666 & 1.6139 & 0.0020 \\
30 & 45 & 0.5 &  & 0.0003 & 0.0094 & 0.5711 &  & 0.9188 & 1.5557 & 0.0020 \\
30 & 45 & 0.75 &  & 0.0004 & 0.0085 & 1.5103 &  & 0.9019 & 1.5185 & 0.0019 \\
30 & 45 & 1 &  & 0.0043 & 0.0175 & 1.4701 &  & 0.9346 & 1.6153 & 0.0020 \\ \midrule
50 & 75 & 0.1 &  & 0.0013 & 0.0045 & 0.3009 &  & 1.4564 & 3.2491 & 0.0042 \\
50 & 75 & 0.25 &  & 0.0018 & 0.0148 & 1.0008 &  & 1.5302 & 3.8956 & 0.0041 \\
50 & 75 & 0.5 &  & 0.0017 & 0.0084 & 0.9686 &  & 1.6549 & 4.3446 & 0.0040 \\
50 & 75 & 0.75 &  & 0.0066 & 0.0688 & 2.4971 &  & 1.6501 & 3.3910 & 0.0040 \\
50 & 75 & 1 &  & 0.0010 & 0.0640 & 4.3327 &  & 1.6596 & 3.6361 & 0.0040 \\ \midrule
80 & 120 & 0.1 &  & 0.0020 & 0.0381 & 1.0832 &  & 2.9608 & 12.1033 & 0.0106 \\
80 & 120 & 0.25 &  & 0.0142 & 0.0618 & 2.3559 &  & 3.1187 & 12.3475 & 0.0102 \\
80 & 120 & 0.5 &  & 0.0030 & 0.0227 & 1.4559 &  & 3.2310 & 12.5632 & 0.0102 \\
80 & 120 & 0.75 &  & 0.0018 & 0.0514 & 2.2257 &  & 3.2231 & 13.2636 & 0.0100 \\
80 & 120 & 1 &  & 0.0039 & 0.0510 & 2.3866 &  & 3.3486 & 11.3097 & 0.0099 \\ \midrule
100 & 150 & 0.1 &  & 0.0030 & 0.0646 & 1.5206 &  & 4.2595 & 26.5560 & 0.0163 \\
100 & 150 & 0.25 &  & 0.0034 & 0.0222 & 1.6112 &  & 4.5020 & 22.6072 & 0.0158 \\
100 & 150 & 0.5 &  & 0.0117 & 0.0504 & 2.6134 &  & 4.7064 & 23.4783 & 0.0154 \\
100 & 150 & 0.75 &  & 0.0153 & 0.0811 & 2.4725 &  & 4.7758 & 22.0584 & 0.0148 \\
100 & 150 & 1 &  & 0.0063 & 0.0563 & 3.6750 &  & 4.5380 & 23.5457 & 0.0151 \\ \bottomrule
\end{tabular}
\end{table}

\subsection{Analysis of the Results}

In this section, we only discuss the results related to $\underline{f}$. The analysis of the results for $\overline{f}$ led to similar conclusions, so we do not report them in the paper. Table \ref{tab:flb} shows the results related to the computation of $\underline{f}$ on class 1 instances, for which the optimal value can be computed by solving a linear program (see Proposition \ref{sovable1}). Each number in the table is an average of the results obtained on 30 instances. In the table, the first three columns show the input parameters, and the following six columns report the results of the solution approaches. We recall that the super-set based method returns a lower bound for $\underline{f}$ (columns $\underline{f}^L$), while the local search algorithm and   FMINCON  return an upper bound (columns $\underline{f}^{U_1}$ and $\underline{f}^{U_2}$, respectively). The gap of an approximate value $\hat{\underline{f}}$ from the optimal value is computed by $|\frac{\hat{\underline{f}}-\underline{f}}{\underline{f}}|$. Hence, lower values correspond to better performance of the approach. For each method, the table reports the average gap and the average running time (in seconds). 

The local search algorithm converges fast to a very tight upper bound (with a maximum average gap of  1.53\%  and a maximum average running time of  4.78 seconds) for all the problem sizes and all the values of the uncertainty parameter $\delta$. FMINCON returns a reasonable upper bound (with a maximum average gap of  8.11), but it takes significantly longer time than the local search algorithm to converge (with the average  running time ranges  between 0.27 of a second and 26.56 seconds). Although calculating $\underline{f}^L$ is fast, its gap from the the optimal value, with the exception of small size instances and low uncertainty, is significant. 
\begin{table}[]

\tabcolsep=0.2cm
\renewcommand\arraystretch{0.8}
\small
\centering
\caption{Results related to the computation of $\underline{f}$ on class 2 instances.}
\label{tab:nb}
\begin{tabular}{@{}cccccccccccc@{}}
\toprule
\multicolumn{3}{c}{input} &  & \multicolumn{2}{c}{$\underline{f}^{U_1}<\underline{f}^{U_2}$} &  & \multicolumn{2}{c}{$\underline{f}^{U_2}<\underline{f}^{U_1}$} &  & \multicolumn{2}{c}{average time (sec)} \\ \cmidrule(r){1-3} \cmidrule(lr){5-6} \cmidrule(lr){8-9} \cmidrule(l){11-12} 
$m$ & $n$ & $\delta$ &  & freq. & WAG &  & freq. & WAG &  & LS & FMINCON \\ \midrule
10 & 15 & 0.1 &  & 22 & 0.0024 &  & 8 & 0.0000 &  & 0.3130 & 0.4605 \\
10 & 15 & 0.25 &  & 21 & 0.0074 &  & 9 & 0.0000 &  & 0.3190 & 0.4511 \\
10 & 15 & 0.5 &  & 21 & 0.0045 &  & 9 & 0.0084 &  & 0.3301 & 0.5361 \\
10 & 15 & 0.75 &  & 19 & 0.0138 &  & 11 & 0.0002 &  & 0.3356 & 0.3225 \\
10 & 15 & 1 &  & 21 & 0.0526 &  & 9 & 0.0081 &  & 0.3536 & 0.3164 \\ \midrule
30 & 45 & 0.1 &  & 19 & 0.0107 &  & 11 & 0.0003 &  & 0.8241 & 1.3156 \\
30 & 45 & 0.25 &  & 19 & 0.0085 &  & 11 & 0.0010 &  & 0.8808 & 1.3838 \\
30 & 45 & 0.5 &  & 20 & 0.0397 &  & 10 & 0.0028 &  & 0.8978 & 1.4990 \\
30 & 45 & 0.75 &  & 17 & 0.0514 &  & 13 & 0.0084 &  & 0.9450 & 2.4027 \\
30 & 45 & 1 &  & 19 & 0.0390 &  & 11 & 0.0059 &  & 0.9791 & 1.8116 \\ \midrule
50 & 75 & 0.1 &  & 17 & 0.0031 &  & 13 & 0.0003 &  & 1.4768 & 4.8394 \\
50 & 75 & 0.25 &  & 15 & 0.0061 &  & 15 & 0.0089 &  & 1.5200 & 4.4078 \\
50 & 75 & 0.5 &  & 16 & 0.3897 &  & 14 & 0.0028 &  & 1.7290 & 4.0354 \\
50 & 75 & 0.75 &  & 16 & 0.0360 &  & 14 & 0.0067 &  & 1.7371 & 4.2858 \\
50 & 75 & 1 &  & 14 & 0.0585 &  & 16 & 0.1042 &  & 1.8498 & 4.9498 \\ \midrule
80 & 120 & 0.1 &  & 23 & 0.0141 &  & 7 & 0.0013 &  & 3.0263 & 13.7443 \\
80 & 120 & 0.25 &  & 17 & 0.0112 &  & 13 & 0.0364 &  & 3.4131 & 14.9026 \\
80 & 120 & 0.5 &  & 19 & 0.1273 &  & 11 & 0.0750 &  & 3.6254 & 13.2121 \\
80 & 120 & 0.75 &  & 16 & 0.0658 &  & 14 & 0.1080 &  & 3.5492 & 15.0886 \\
80 & 120 & 1 &  & 15 & 0.0841 &  & 15 & 0.0504 &  & 3.6229 & 19.0471 \\ \midrule
100 & 150 & 0.1 &  & 25 & 0.0143 &  & 5 & 0.0022 &  & 4.6070 & 30.4261 \\
100 & 150 & 0.25 &  & 23 & 0.1741 &  & 7 & 0.0109 &  & 4.9523 & 27.6289 \\
100 & 150 & 0.5 &  & 18 & 0.0413 &  & 12 & 0.0147 &  & 5.3517 & 31.3020 \\
100 & 150 & 0.75 &  & 18 & 0.0816 &  & 12 & 0.0337 &  & 5.6269 & 44.5924 \\
100 & 150 & 1 &  & 10 & 0.0904 &  & 20 & 0.0583 &  & 5.2683 & 38.2789 \\ \midrule
200 & 300 & 0.1 &  & 26 & 0.1033 &  & 4 & 0.0078 &  & 18.8387 & 226.2555 \\
200 & 300 & 0.25 &  & 15 & 0.2028 &  & 15 & 0.0120 &  & 20.0275 & 242.7648 \\
200 & 300 & 0.5 &  & 14 & 0.0415 &  & 16 & 0.1506 &  & 21.6897 & 298.3727 \\
200 & 300 & 0.75 &  & 9 & 0.0318 &  & 21 & 0.5077 &  & 20.1736 & 231.7975 \\
200 & 300 & 1 &  & 11 & 0.0254 &  & 19 & 0.2233 &  & 20.3656 & 203.9699 \\ \midrule
300 & 400 & 0.1 &  & 18 & 0.0655 &  & 12 & 0.0150 &  & 43.7986 & 1,400.9399 \\
300 & 400 & 0.25 &  & 18 & 0.1182 &  & 12 & 0.0424 &  & 46.3638 & 1,601.2650 \\
300 & 400 & 0.5 &  & 4 & 0.0539 &  & 26 & 0.2600 &  & 48.3374 & 1,536.8595 \\
300 & 400 & 0.75 &  & 6 & 0.0462 &  & 24 & 0.1886 &  & 39.8791 & 1,413.8520 \\
300 & 400 & 1 &  & 5 & 0.0434 &  & 25 & 1.0230 &  & 40.4988 & 1,351.4880 \\ \midrule
400 & 500 & 0.1 &  & 22 & 0.1410 &  & 8 & 0.0196 &  & 76.7331 & 1,740.2052 \\
400 & 500 & 0.25 &  & 17 & 3.3220 &  & 13 & 0.0188 &  & 76.2029 & 1,800 \\
400 & 500 & 0.5 &  & 9 & 0.0247 &  & 21 & 0.4430 &  & 76.8172 & 1,760.1362 \\
400 & 500 & 0.75 &  & 3 & 0.0063 &  & 27 & 0.8576 &  & 74.1061 & 1,784.4656 \\
400 & 500 & 1 &  & 2 & 0.0027 &  & 28 & 1.3027 &  & 70.8703 & 1,722.2246 \\ \midrule
500 & 600 & 0.1 &  & 22 & 0.0889 &  & 8 & 0.0274 &  & 124.1661 & 1,768.2018 \\
500 & 600 & 0.25 &  & 12 & 0.1292 &  & 18 & 0.0410 &  & 123.9820 & 1,800 \\
500 & 600 & 0.5 &  & 4 & 0.0224 &  & 26 & 0.3193 &  & 110.2257 & 1,770.3525 \\
500 & 600 & 0.75 &  & 4 & 0.0211 &  & 26 & 1.6025 &  & 112.0062 & 1,800 \\
500 & 600 & 1 &  & 2 & 0.0048 &  & 28 & 0.9238 &  & 108.6081 & 1,800 \\ \bottomrule
\end{tabular}
\end{table}

For class 2 instances, given the poor performance of the super-set based method, we only focus on the results obtained from the local search and the solver. As noted earlier, the unique B-stability property is not guarantied in class 2 instances, and as a result we are not able to solve ORP$_b$ to optimality using existing methods. We here compare  the local search and FMINCON  against each other. For instances where the local search outperforms the solver, i.e. $\underline{f}^{U_1}<\underline{f}^{U_2}$, we calculate the gap as $|\frac{\underline{f}^{U_1}-\underline{f}^{U_2}}{\underline{f}^{U_2}}|$, while for instances where the solver returns a better solution than the local search, namely $\underline{f}^{U_2}<\underline{f}^{U_1}$, the gap is determined by $|\frac{\underline{f}^{U_2}-\underline{f}^{U_1}}{\underline{f}^{U_1}}|$. Additionally, the following measure gives a weighted average gap (WAG) for each method. Specifically, it applies both the average gap and the number of times an algorithm outperforms the other, and reads
\begin{equation}\label{eq:wag}
\text{WAG}=\frac{(\text{number of instances on which an algorithm performs better})* (\text{average gap})}{\text{total number of instances}}.
\end{equation}  
Thus, the higher the WAG value, the better the performance. Table \ref{tab:nb} reports the results corresponding to the computation of $\underline{f}$ for class 2 instances. The  first three columns show  the input data. Columns 4 and 5 give the frequency of times the local search outperforms FMINCON and the weighted average gap, respectively. Similarly, the following two columns represent the same attributes for when  FMINCON outperforms the local search. The last two columns indicate the average running times for each method.

Our results in Table \ref{tab:nb} suggest that the local search outperformed FMINCON on 683 instances out of 1,350 instances. Moreover, the weighted average gap of  the local search is larger than that of FMINCON for 28 combinations of $m,n,\delta$ out of a total of 45 combinations. FMINCON tends to return a better weighted average gap for instances with larger sizes and uncertainty parameters, but it also takes a much longer time to converge to a solution, see, for example, the instances with $m=300$, $n=400$, and $\delta=1$. For this particular case,  FMINCON took 1,351.49 seconds on average to converge, while our local search converged, on average, in 40.5  seconds. From the table, we can  observe that the WAG measure ranges between 0.0024 and 3.3220  for the local search and  between 0.0000 and 1.6025 for FMINCON. We also see that the computation time of FMINCON grows faster compared to that of the local search, with a maximum average running time of 1,800 seconds against that of 124.17 seconds for the local search.

\section{Case Study: Healthcare Access Measurement} \label{case}
Here, we show an application of ORP$_b$ when an outcome function is used to measure spatial access to healthcare services. We first introduce a linear program which has been recently proposed in the literature to derive a matching  between patients and providers. We then use our approach to evaluate how uncertainty in input data influences spatial access to healthcare services, and discuss how the results of our approach can be used  for more reliable decision making.

\subsection{Optimization Model and Outcome Function}
Optimization models used to quantify potential spatial access to healthcare mimic the  interactions between two sets of actors in the system: the target population in need of service within each geographical area or community (e.g., census tract level), namely $e_i$ with $i \in T$, and the network of provider locations $j \in W$. Model 1  is a simplified version of  the mathematical formulation proposed in the literature \cite{gentili2015small,nobles2014spatial} to determine a matching between the population in need of healthcare services and providers providing them. The matching is determined to minimize the total distance traveled at the system level under a set of constraints: (i) \textit{coverage constraints} match as many people in need as possible; (ii) \textit{accessibility constraints} ensure the matching takes into account modes of transportation and Health Resources Services Administration recommendations on the maximum allowed distance for matching; (iii) \textit{capacity constraints} account for the maximum and minimum providers' caseload to stay in practice.

\begin{figure}[t]
\centering
\small
\singlespacing
\begin{tabular}{|lll|l|}
\multicolumn{3}{l}{{\normalsize{\bf Model 1.} Modeling access to primary care.}}\\
\hline
$\min  \sum_{i \in T, j \in W}g d_{ij}x_{ij} \label{eq:objective-function}$ &&&   	$\rightarrow$ Total distance is minimized.\\
subject to&&&\\
\hline
{\it Coverage constraints:}&&&\\
 $ \sum_{j\in W} x_{ij}\leq  e_i  $&$\forall i\in T, \label{eq:popcensus-constraints}$ &(C1) &$\rightarrow$  The assignment does not exceed population in need\\
&&&\hspace{4mm} in census tract $i$.\\
 $\sum_{i \in T, j\in W}x_{ij}\geq  \alpha E,  $& &(C2)&  $\rightarrow$ The assignment covers as much population as possible \\
&&&\hspace{4mm}within the national access policy.\\
\hline
{\it Accessibility constraints:}&&&\\
 $ \sum_{j\in W: d_{ij} \geq d_{max}}x_{ij} =0 $ & $ \forall i \in T, \label{eq:max-distance}$ &(C3)&$\rightarrow$ Patients are not assigned to too far providers. \\
 $ \sum_{j\in W: d_{ij} \geq d_{max}^{mob}}x_{ij} \leq m_ie_i $ & $ \forall i \in T, \label{eq:max-mob-distance}$ &  (C4)&$\rightarrow$ Patients that own a vehicle can  travel further than \\
&&&\hspace{4mm}patients without a vehicle.\\
\hline
 {\it Availability constraints:}&&&\\
 $ \sum_{i \in T}g x_{ij} \leq c^{max}_j  $&$\forall j \in W, \label{eq:max-caseload}$ &(C5)&$\rightarrow$ Providers' maximum caseload is not exceeded.\\
&&& \\
 $ \sum_{i \in T}g x_{ij} \geq c^{min}_j $&$ \forall j \in W ,\label{eq:min-caseload}$ &(C6)&$\rightarrow$ Providers are assigned a minimum caseload\\
&&&\hspace{4mm}to stay in practice.   \\
\hline
 {\it Non-negativity constraints:}&&&\\
 $x_{ij} \geq 0 $&$ \forall i \in T, \; j \in W. \label{eq:nonnegativity}$&& \\
\hline
\end{tabular}
\end{figure}

The decision variables $x_{ij}$ in the model determine the number of patients  in a census tract $i \in T$ assigned to a specific provider location $j \in W$. Parameters of the model include: 
\begin{itemize}
\item $g$:  number of yearly visits required by a patient,
\item $e_i$: population size in census tract $i$ in need of healthcare services,
\item $d_{ij}$: travel distance between the centroid of census tract $i$ and provider location $j$, 
\item $E$: total population in the system in need of healthcare services,
\item $\alpha$: percentage of the population which should be assigned to a provider, 
\item $d_{max}$: maximum allowed distance between a patient and the assigned provider according to the Health Resources Services Administration recommendations,  
\item $d_{max}^{mob}$: maximum distance we assume that people without a vehicle are willing to travel to reach the assigned provider, 
\item $m_i$: percentage of population in  census tract $i$ that owns a vehicle,
\item $c^{max}_j$ ($c^{min}_j$): maximum (minimum) provider's caseload in location $j$.
\end{itemize}

For our analysis, we consider an interval version of Model 1 obtained by allowing parameters $c^{max}_j$ to vary within a given interval. Specifically, we assume that the availability constraints (C5) in the model are of the form
 $$
 \sum_{i \in T}g x_{ij} \leq [\lambda c^{max}_j, \beta c^{max}_j ]\;\;\; \forall j \in W,
 $$
where $\lambda$ and $\beta$  are the maximum and the minimum perturbations from the nominal values $c^{max}_j$ for $ j \in W$, respectively. Note that the resulting intervals vary independently.  Such uncertainty in the capacity of a provider can be due to increasing and/or decreasing personnel, overtime or
days-off of providers, and inaccurate estimations of the capacity, among others.

Access measures are outcome functions defined as linear functions of an optimal assignment derived from an optimization model \cite{nobles2014spatial}. For this illustrative example, we consider the access measure $f_i(x)$ defined as the average distance traveled by patients in a given census tract $i$ to reach the assigned provider, which is formally defined as
\begin{equation*}
f_i(x)=d_{max}+\frac {1}{e_i} \sum_{j\in W}{(d_{ij}-d_{max})}x_{ij}\quad\forall i\in T.
\end{equation*}
The above measure gives the weighted average of the distance traveled by patients in each census tract. We assume that for those patients who are not assigned to a provider, $f_i(x)$ is equal to $d_{max}$. Thus, the access measure ranges from $0$ to $d_{max}$. 

The resulting estimates can be used by policy makers to identify where the communities with the greatest need for improvement are, so that they can be targeted with additional resources, including new providers or facilities, transportation services improvement, tele-health service development, etc. 
\subsection {Case Study}
We illustrate our analysis to quantify access to the primary care service for children in the State of Mississippi in the United States, for a total of 637 census tracts and 897 provider locations. Providers’ practice location addresses are obtained from the 2013 National Plan and Provider Enumeration System (NPPES). The patient population is aggregated at the census tract level. We used the 2010 SF2 100\% census data and the 2012 American Community Survey data to compute the number of children in each census tract along with information  on ownership of cars, to estimate the access to private transportation means. We set $d_{max}=25$ miles, $d_{max}^{mob}=10$ miles, $\alpha=0.85$, and $g=2$ (see \cite{gentili2018quantifying} for further details on the input parameters). The resulting model contains 63,573 variables and 3,706 constraints. For the interval version of the model, we set $\lambda=0.8$ and $\beta=1.2$. 

\subsection {Importance of Quantifying Sensitivity to Data Perturbations}
Failing to consider uncertainty in the input parameters may significantly affect  the decision making  on the choice of which census tracts to target for possible interventions. To elaborate further, we compared the results of Model 1 on two different realizations of interval data, referred to as realizations 1 \& 2.  Figure \ref{fig:ScenariosDifference}a shows the difference in the access measures obtained in the two optimization runs (corresponding to realizations 1 \& 2) . Darker regions represent higher differences, that is, census tracts where the estimate of the access measure is more unstable. The circled census tracts are those  for which the resulting access measure changes more than 5 miles between the two runs, implying that some census tracts may be considered having high or low level access depending on which realization of the data is considered.
Consider now Figure \ref{fig:ScenariosDifference}b where the difference in the access measures, obtained for two different additional realizations (referred to as realizations 3 \& 4) of the parameter $c^{max}_j$, is shown. The comparison between Figures \ref{fig:ScenariosDifference}a and \ref{fig:ScenariosDifference}b tells two different stories, showing completely different sets of census tracts for which the access measure seems more unstable.

In this sense, quantifying sensitivity of the access measure to data perturbations would be crucial for reliable decision making. Such an analysis would indeed reveal: (i) census tracts that are \textit{certainly} in need of a targeted intervention (e.g., those census tracts for which the access measure is high and not sensitive to data perturbations), and (ii) census tracts that are \textit{certainly not}  in need of any intervention (e.g., those census tracts for which the access measure is low and not sensitive to data perturbations). It would also help to  determine census tracts that may fall, due uncertainty in the data, in either one of the two categories, and for which, therefore, a deeper investigation might be needed.
By solving ORP$_b$ in this context, we can assess such a quantification. Additionally, we are able to answer questions relevant for policy making, including:

\begin{figure}
	\centering
	\begin{subfigure}{0.45\textwidth}
		\includegraphics[scale=0.45]{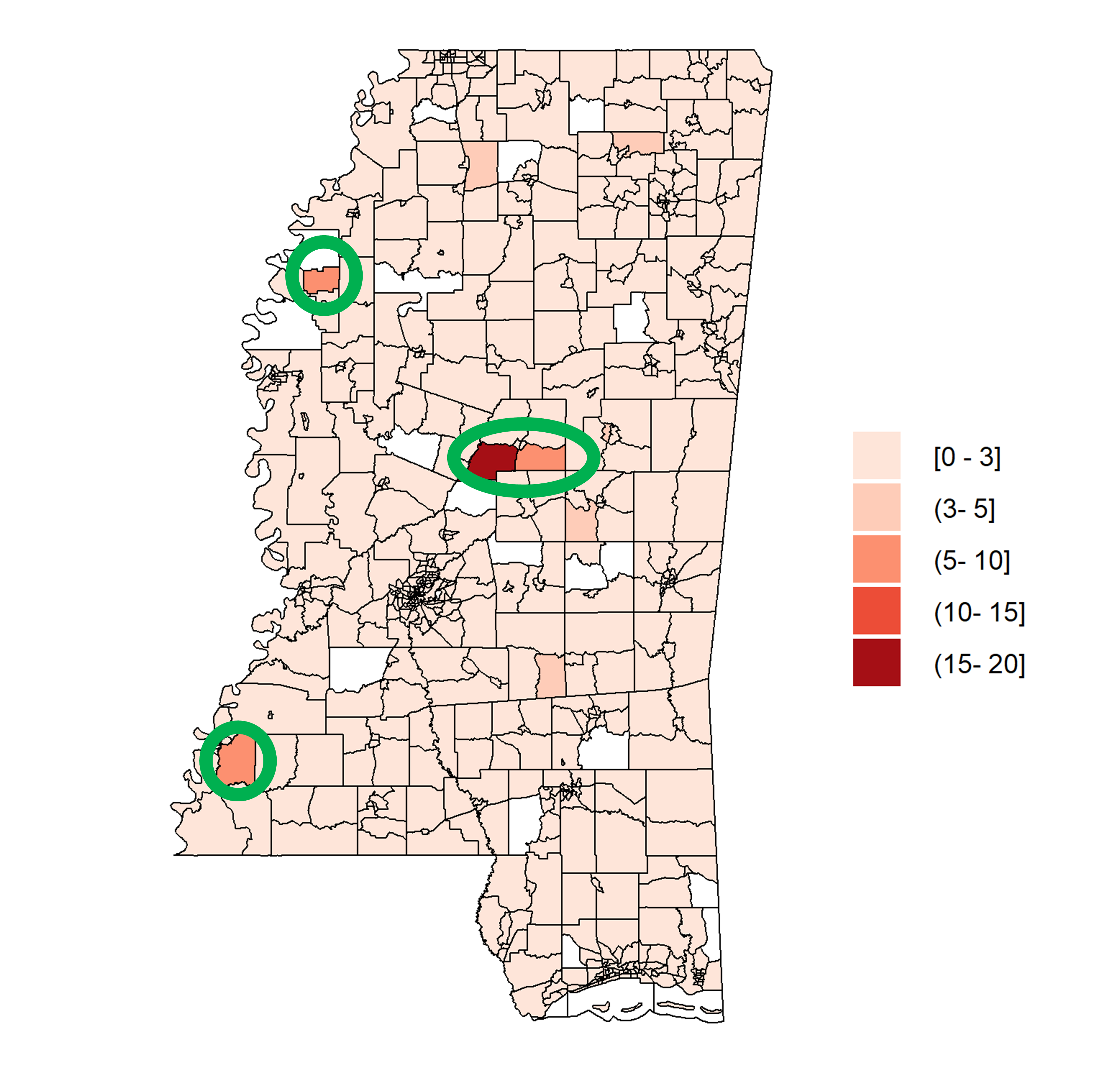}
		\caption{Realizations 1 \& 2}
	\end{subfigure}
	\begin{subfigure}{0.45\textwidth}
		\includegraphics[scale=0.45]{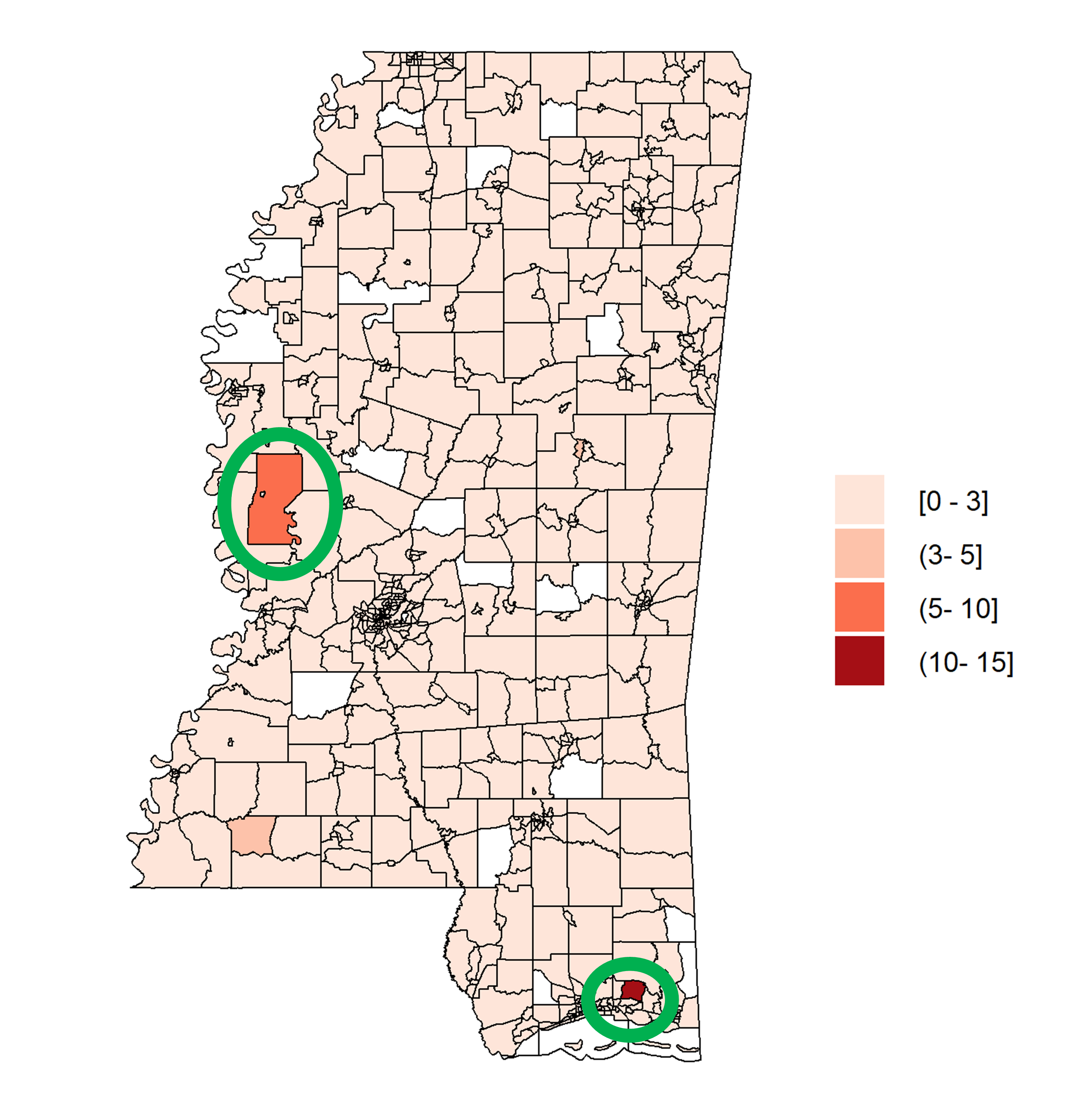}
		\caption{Realizations 3 \& 4}
	\end{subfigure}
	\caption{Difference in the access measures considering four random realizations of the input parameters.}
	\label{fig:ScenariosDifference}
\end{figure}

\begin{itemize}
\item Q1: Given the current primary care resources, what are the minimum and maximum access levels for each census tract? 
\item Q2: What are the census tracts with the highest (lowest) variability in the access measures? 
\item Q3: What is the percentage of the census tracts where the access level is higher (lower) than a given threshold for all the possible realizations of the data?

\end{itemize}
We applied our local search algorithm to solve ORP$_b$ in this context, and addressed the above questions.

\subsection{Implementation of Algorithms}
The outcome function (i.e., the access measure) is associated with each census tract. Hence, we applied our local search algorithm once for each outcome function (total of 637 functions). Zheng et al. \cite{zheng2017regularized} used the Monte Carlo approach to evaluate sensitivity of the access measure to uncertainty in the input data. Therefore, we compare the results of our approach with those returned by  the Monte Carlo approach.

For the local search algorithm, we defined  $Q=\{0.25,0.5,0.75,1\}$. Due to the large size of the problem and the structural dependencies among the decision variables \cite{zheng2017regularized}, defining an ordered set $V$ and randomly choosing constraints, whose right-hand sides are perturbed simultaneously, would not be very efficient. 
Thus, we defined a  set $V(i)$ for each given census tract $i$ as $V(i) = \{H_1(i), H_2(i), H_3(i)\}$ for all $i\in T$,  
where $H_l(i)$, $l=1,2,3$, are predefined sets of constraints associated with census tract $i$. Note that for this specific application, each constraint to be perturbed corresponds to a provider $j$ whose max capacity parameter ($c_j^{max}$) is perturbed from its nominal value. 
The first set of constraints to be explored corresponds to providers  who are not too far from the analyzed census tract, that is, $H_1(i) := \{j \in W: d_{ij} \leq 50 \}$.
The second set of constraints to be explored are those constraints corresponding to providers who do not correspond to constraints in $H_1(i)$ and who are not too far from census tracts which are neighbors of the  census tract under study. Specifically, we defined two census tracts to be neighbors if the distance between their centroids is less than 50 miles. Given a census tract $i$, let us denote the set of neighboring census tracts as the set $A(i) = \{ a \in T: d_{ia}\leq 50\}$. The second set of constraints is then defined as $H_2(i) :=\{j \in W: d_{aj}\leq 50, \forall a\in A(i)\} \backslash H_1(i)$. Finally, the last set $H_3(i)$ consists of the remaining providers, that is, $H_3(i) := W \backslash \{H_1(i) \cup H_2(i)\}$. 

We set the maximum number of shakes to 1 and the minimum acceptable improvement to 0.1. The number of iterations for the Monte Carlo approach was set equal to 100, which is the maximum number of linear programs  solved by the local search algorithm among all the runs. 
\subsection{Analysis of the Results}
Monte Carlo simulation is a simple approach to compute the maximum and the minimum values of the access measure for each census tract; however, in this context, it might lead to a severe underestimation of the overall quantification. To show this, we computed the range of the resulting access measure for each census tract using both the local search algorithm and the Monte Carlo approach. The range is computed as the difference between the maximum and the minimum of the access measure for each census tract. The difference in the results is shown in Figures \ref{fig:Bars}  and \ref{fig:MS_DiffNS_YlOrRd}.  
Specifically, Figure \ref{fig:Bars} shows the number of census tracts for which the range of the access measures is within 2 and 20 miles for the two approaches. Results obtained from the Monte Carlo approach show that the access range in 28 census tracts out of 635 census tracts varies between 4 and 20 miles, where in 12 of them the access range varies between 8 and 20 miles. However, the local search algorithm reveals that the access range in 89 census tracts varies between 4 and 20 miles, where in 47 of them the access range varies between 8 and 20 miles. It is noteworthy that Figure \ref{fig:Bars} does not represent census tracts with the access range of less than 2 miles.  
\begin{figure}[h]
\centering
	\includegraphics[scale=0.45]{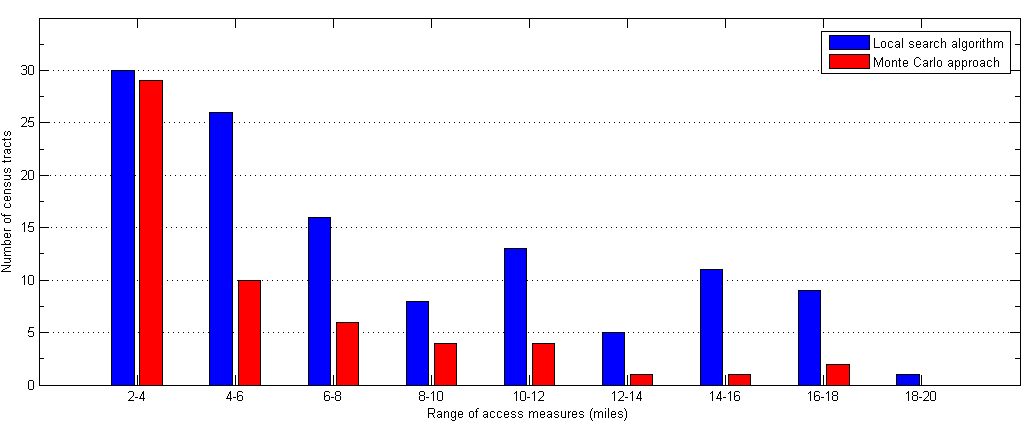}
	\caption{Distribution of the census tracts for which the access range varies between 2 and 20 miles for the two approaches (i.e., Monte Carlo approach and the local search algorithm).}
	\label{fig:Bars}
\end{figure}

\begin{figure}[h]
\centering
	\includegraphics[scale=0.20]{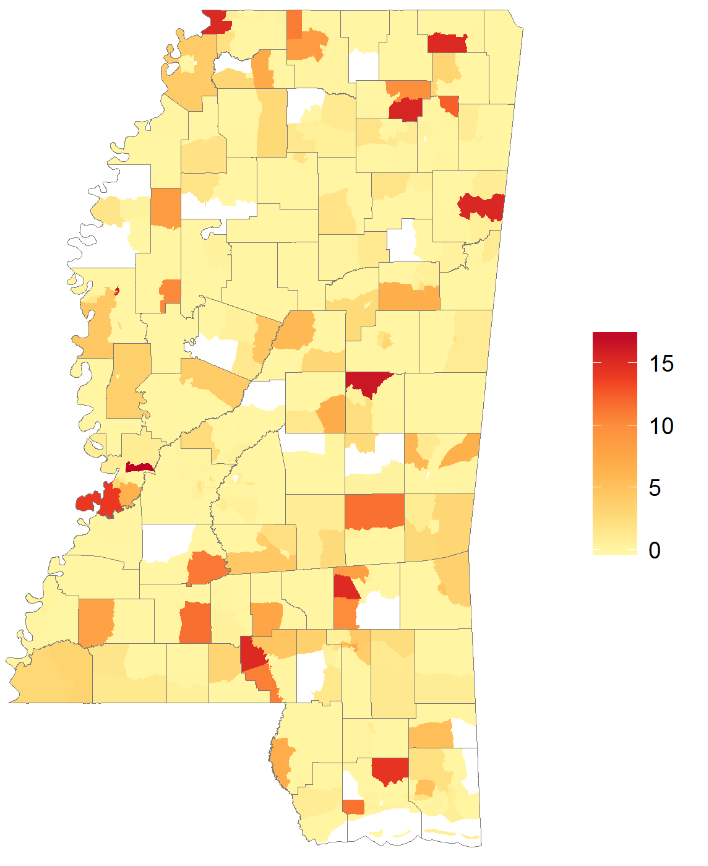}
	\caption{Difference between the access ranges estimated by the Monte Carlo approach and those estimated by the local search algorithm.}
	\label{fig:MS_DiffNS_YlOrRd}
\end{figure}

Figure \ref{fig:MS_DiffNS_YlOrRd} depicts the map of the difference in the ranges obtained comparing the two approaches. Darker census tracts are those for which the Monte Carlo approach severely underestimates sensitivity of the access measures, that is, those census tracts for which the difference between the range estimated by the Monte Carlo approach and the range estimated by the local search algorithm is greater than 15 miles. 
From  Figures \ref{fig:Bars} and \ref{fig:MS_DiffNS_YlOrRd}, it is evident that the Monte Carlo approach is not a right tool to quantify sensitivity of the access measure to uncertainty in the data. Its use to answer the questions Q1-Q3 would lead to a severe underestimation. Hence, in what follows, we only focus on the results obtained from our local search algorithm.

Figures \ref{fig:MinMaxAccess}a and \ref{fig:MinMaxAccess}b show the lower  and the upper limits of the access measure for each census tract (Q1), and Figure \ref{variation} shows the range of the access measure for each census tract (Q2). Darker areas in  Figure \ref{variation} are those census tracts where the range of the access measure is greater than 10 miles, which corresponds to 39 census tracts out of 635 (i.e., 6\% of the total). Table \ref{analysis} and Figure \ref{fig:maxmin} can be used to address question Q3. The table shows the distribution of the minimum and maximum of access within the state among all the census tracts.
Figure \ref{fig:maxmin}a divides the census tracts in two groups according to the value of their minimum access level: dark (light) tracts have a minimum access which is greater (less than or equal to)  10 miles. 
Figure \ref{fig:maxmin}b  divides the census tracts in two groups according to the value of their maximum access level: dark (light) tracts have a maximum access which is greater than (less than or equal to)  5 miles. 
 According to Table \ref{analysis},  13\% of the census tracts have a minimum level of access which is greater than 10 miles. In other words, the population in these census tracts always travel on average at least 10 miles to reach the assigned provider. These census tracts are the dark regions in Figure \ref{fig:maxmin}a. On the other hand, 64\% of the census tracts (the column maximum level of access in Table  \ref{analysis}) are such that the corresponding population never travel more than 5 miles to reach the assigned provider. These census tracts are the light regions in Figure \ref{fig:maxmin}b. 
These findings are important for decision makers to prioritize interventions. Indeed, for example, dark census tracts in Figure \ref{fig:maxmin}a depict those census tracts which are surely in need for targeted actions to improve their access to healthcare services because  they were identified by accounting for all the possible realizations of the uncertain data, while the light census tracts in Figure \ref{fig:maxmin}b have a good access to healthcare services among all the possible realizations of the uncertain data; hence, they are unlikely to be the object of targeted interventions.

\begin{figure}[H]

	\begin{subfigure}{0.45\textwidth}
		\includegraphics[scale=0.40]{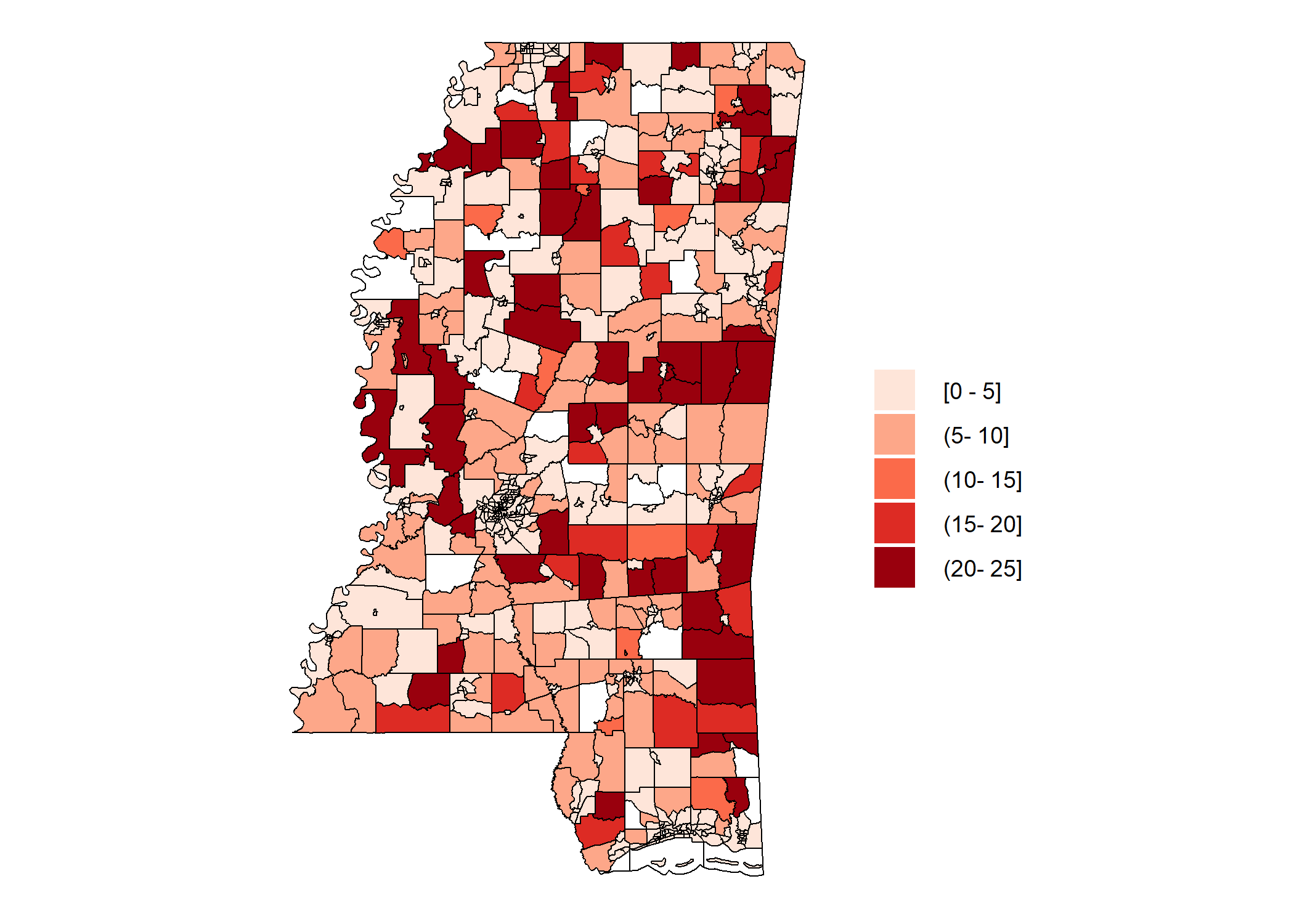}
		\caption{Minimum access level}	\end{subfigure}
	\begin{subfigure}{0.45\textwidth}
		\includegraphics[scale=0.40]{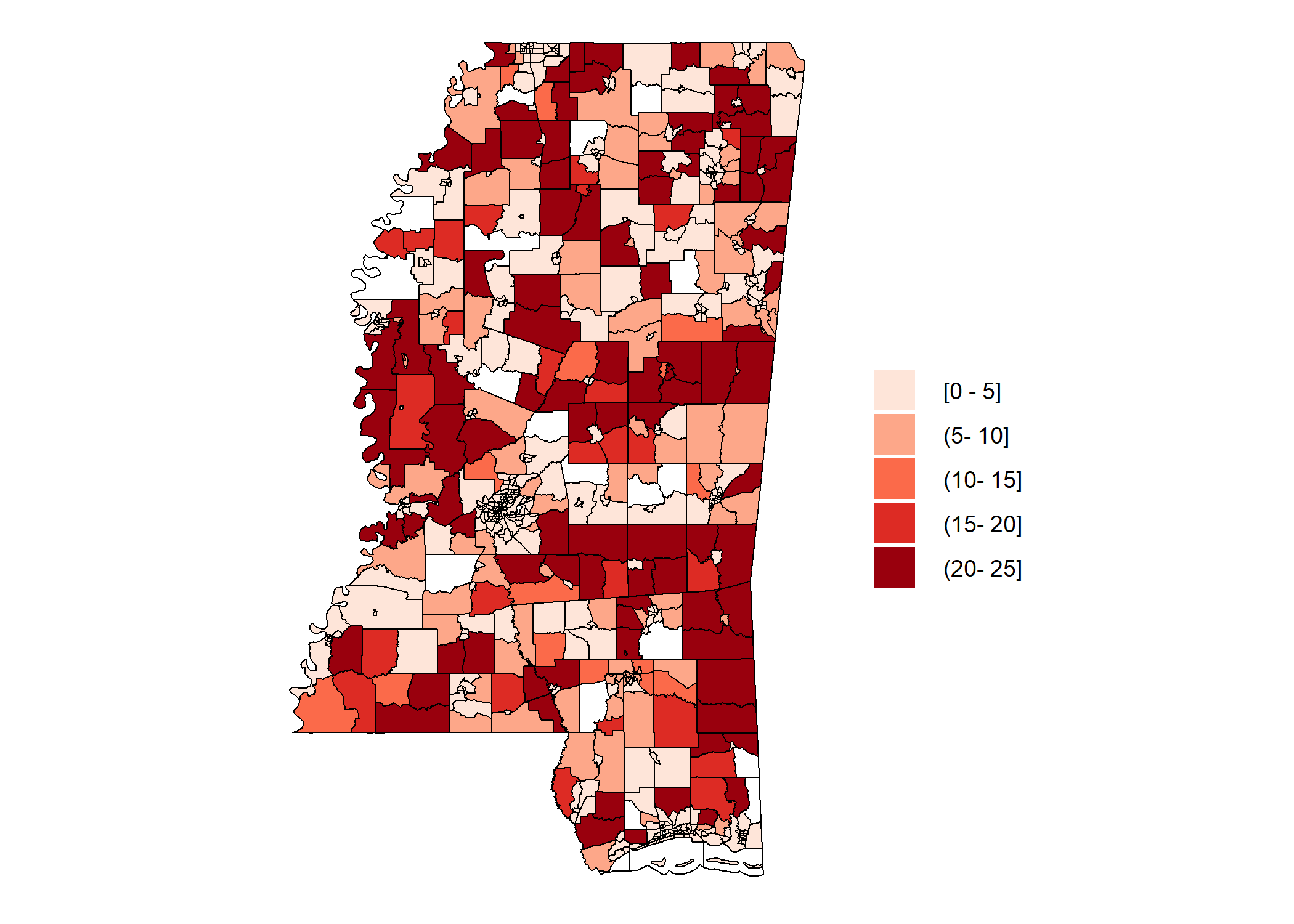}
		\caption{Maximum access level}
	\end{subfigure}
\caption{Minimum and maximum  of the access measures.}
	\label{fig:MinMaxAccess}
\end{figure}
\begin{figure}[H]
\centering
	\includegraphics[scale=0.40]{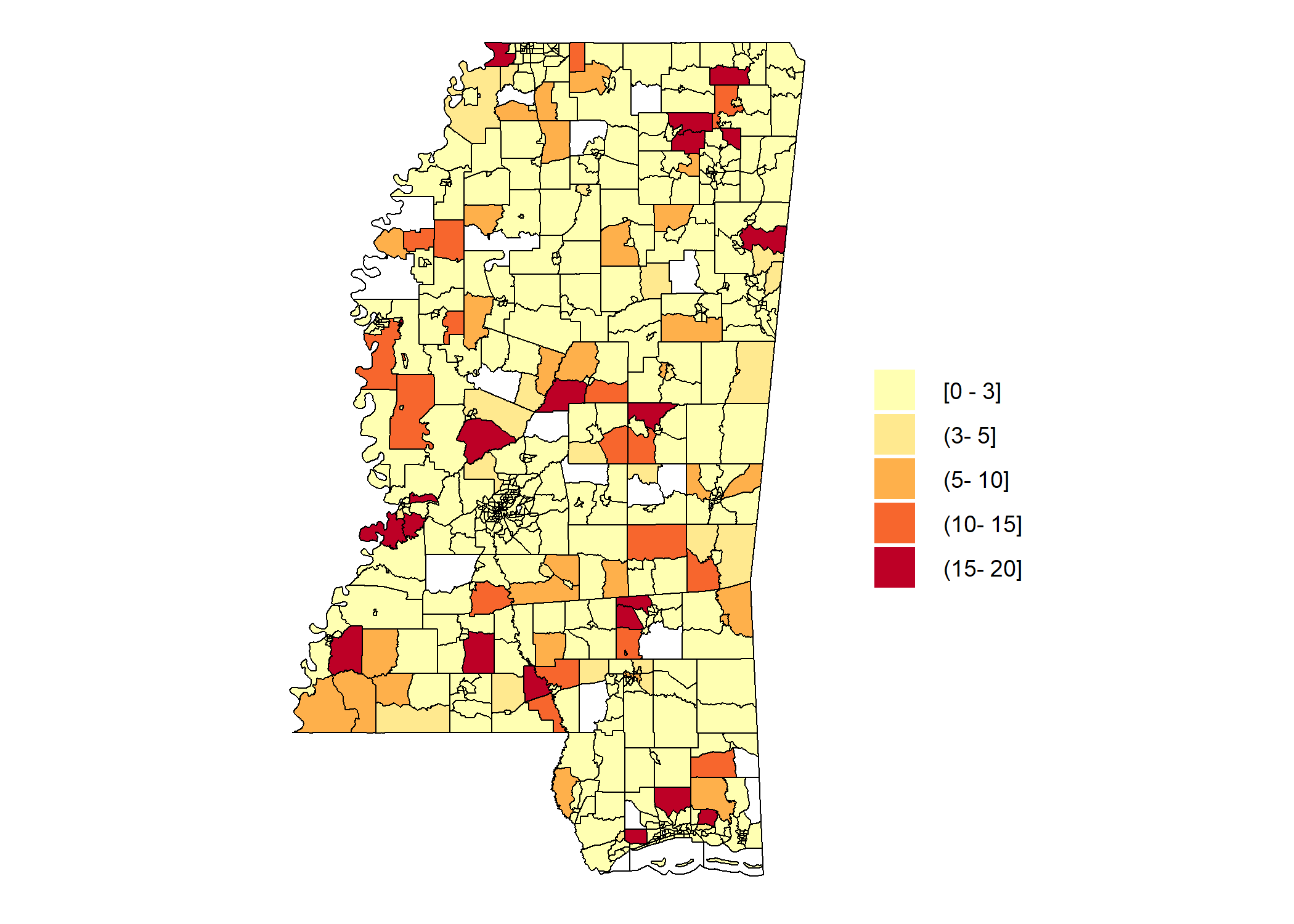}
	\caption{Range of the access measure for each census tract.}
	\label{variation}
\end{figure}

\begin{table}[H]
\small
\centering
\caption{Distribution of census tracts corresponding to the minimum and maximum access levels (for different access ranges).}
\begin{tabular}{@{}ccc@{}}

\toprule
access (mile) & minimum access level & maximum access level \\ \midrule
0-5                  & 69\%                    & 64\%                    \\
5-10                 & 18\%                    & 14\%                    \\
10-15                & 2\%                     & 2\%                     \\
15-20                & 3\%                     & 4\%                     \\
20-25                & 8\%                     & 15\%                    \\ \bottomrule
\label{analysis}
\end{tabular}
\end{table}
\begin{figure}[H]
	\centering
	\begin{subfigure}{0.45\textwidth}
		\includegraphics[scale=0.40]{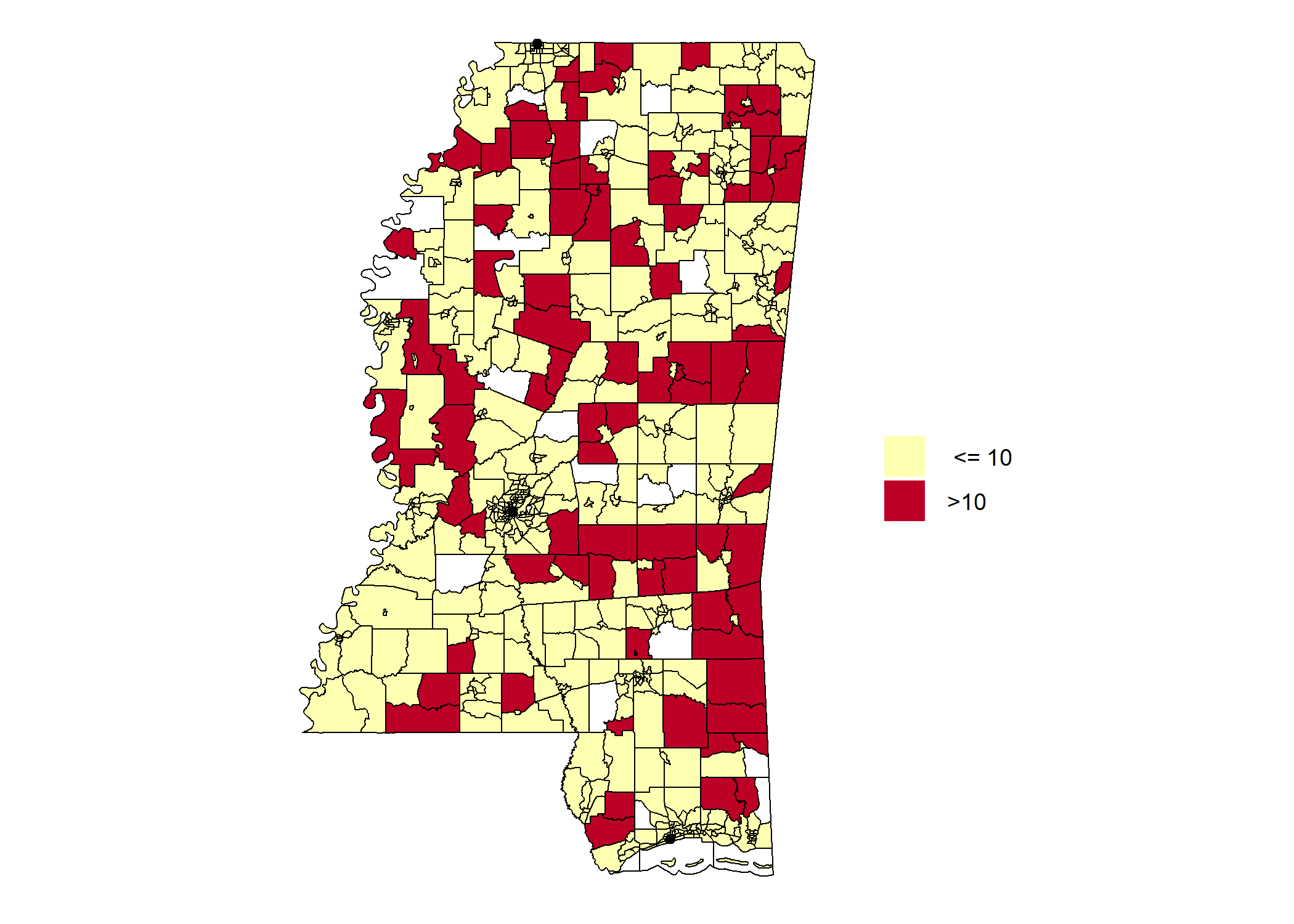}
		\caption{Minimum access level}
	\end{subfigure}
	\begin{subfigure}{0.45\textwidth}
		\includegraphics[scale=0.40]{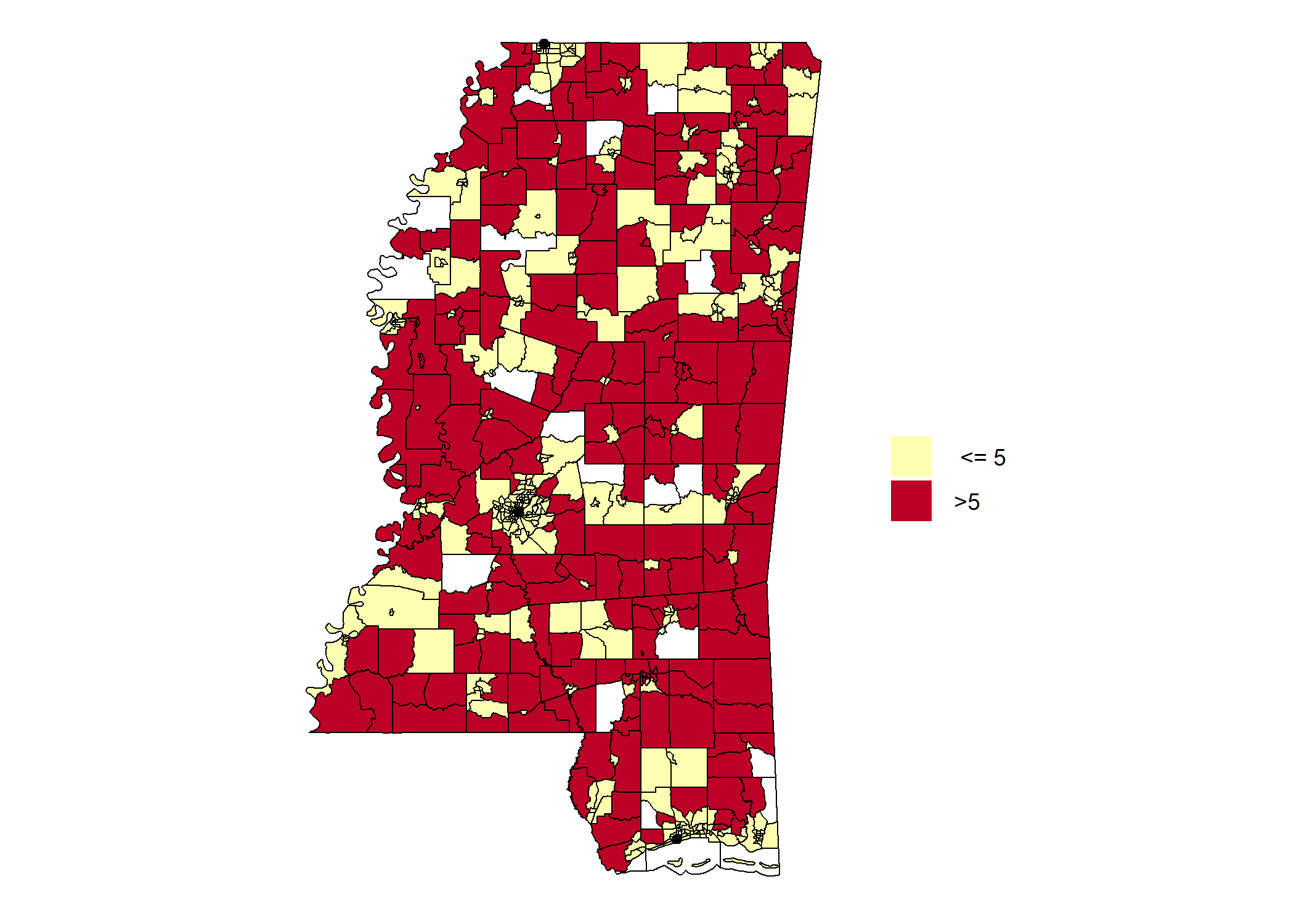}
		\caption{Maximum access level}
	\end{subfigure}
	\caption{Classification of the census tracts according to their minimum and maximum access levels.}
	\label{fig:maxmin}
\end{figure}
\section{Conclusions} \label{conclusion}
We formulated and studied  the {\it  outcome range problem} in the field of interval linear programming. Our problem aims at quantifying unintended consequences of an optimal decision in an uncertain environment. The problem is particularly relevant for government agencies, public health decision makers, policy makers, city managers and other stakeholders who make decisions that have differential impacts on different
communities and sub-populations, and we showed this on a real case study related to healthcare access measurement. In this paper, we gave a very general definition of the outcome range problem, and addressed a specific version of it for which we assessed the computational complexity and studied some theoretical properties. We then offered two approximation methods. Our proposed local search algorithm seems promising in computing a cheap but  tight approximation of the problem. In contrast, the proposed super-set based method does not return a tight approximation; thus, there is room for improvement.
We tested the methods on two sets of randomly generated instances, and on a real case instance. We plan to further explore theoretical properties and solution methods for a more general version of the problem.


\bibliographystyle{apa}
\bibliography{COR_V7}

\end{document}